\documentclass[a4paper]{amsart} 
\usepackage{graphicx}
\usepackage{color}
\usepackage{hyperref}
\usepackage{tikz,amssymb,amsmath} 
\usepackage[arrow,curve,matrix,tips]{xy}
\usepackage{enumerate}
\usepackage{tkz-euclide}
\usepackage{esint}
\usepackage{setspace}
\usepackage{verbatim}
\makeatletter
\tikzset{
  treenode/.style = {align=center, inner sep=0pt, text centered,
    font=\sffamily},
  arn_n/.style = {treenode, circle, white, font=\sffamily\bfseries, draw=black,
    fill=black, text width=1.5em},
  arn_r/.style = {treenode, circle, red, draw=red, 
    text width=1.5em, very thick},
  arn_x/.style = {treenode, rectangle, draw=black,
    minimum width=0.5em, minimum height=0.5em}
}

\newtheorem{theorem}{Theorem}[section]
\newtheorem{lemma}[theorem]{Lemma}

\newtheorem{proposition}[theorem]{Proposition}
\newtheorem{corollary}[theorem]{Corollary}

\theoremstyle{definition} 
\newtheorem{definition}[theorem]{Definition}
\newtheorem{example}[theorem]{Example}

\newtheorem{notation}[theorem]{Notation}

\newtheorem{summary}[theorem]{Summary}
\newtheorem{remark}[theorem]{Remark}

\numberwithin{equation}{section} 
\setcounter{tocdepth}{1}

\makeatletter
\def\moverlay{\mathpalette\mov@rlay}
\def\mov@rlay#1#2{\leavevmode\vtop{%
   \baselineskip\z@skip \lineskiplimit-\maxdimen
   \ialign{\hfil$\m@th#1##$\hfil\cr#2\crcr}}}
\newcommand{\charfusion}[3][\mathord]{
    #1{\ifx#1\mathop\vphantom{#2}\fi
        \mathpalette\mov@rlay{#2\cr#3}
      }
    \ifx#1\mathop\expandafter\displaylimits\fi}
\makeatother

\setcounter{tocdepth}{2}

\begin{document} 

\title{On the Tree Structure of Orderings and Valuations on Rings}

\author{Simon M\"uller}

\begin{abstract} Let $R$ be a not necessarily commutative ring with $1.$ In the present paper we first introduce a notion of quasi-orderings, which axiomatically subsumes all the orderings and valuations on $R$. We proceed by uniformly defining a coarsening relation $\leq$ on the set $\mathcal{Q}(R)$ of all quasi-orderings on $R.$ One of our main results states that $(\mathcal{Q}(R),\leq')$ is a rooted tree for some slight modification $\leq'$ of $\leq,$ i.e. a partially ordered set admitting a maximum such that for any element there is a unique chain to that maximum. As an application of this theorem we obtain that $(\mathcal{Q}(R),\leq')$ is a spectral set, i.e. order-isomorphic to the spectrum of some commutative ring with $1.$ We conclude this paper by studying $\mathcal{Q}(R)$ as a topological space.
\end{abstract}

\begin{center} {\small{\today}} \end{center}

\maketitle

\section{Introduction} \label{sectionintro}

\noindent
Let $K$ be a field. In \cite{Efrat}, Efrat defines localities on $K$ as the union of all orderings and valuations on $K.$ Any locality $\lambda$ induces a subgroup $G_{\lambda}$ of $K^{\times},$ namely
\[
G_{\lambda} = \left\{ \begin{array}{cc} P\backslash\{0\}, & \textrm{ if } \lambda = P \textrm{ is an ordering} \\ 1+I_v, & \textrm{if } \lambda = v \textrm{ is a valuation} \end{array} \right.
\]

\noindent
where $I_v := \{x \in K\colon v(x) > 0\}.$ This observation yields a coarsening relation $\leq$ on the set $\Lambda$ of all localities on $K,$ given by $\lambda_1 \leq \lambda_2 :\Leftrightarrow G_{\lambda_2} \leq G_{\lambda_1}.$ Efrat proves that $(\Lambda,\leq)$ forms a rooted tree (cf. \cite[Corollary 7.3.6]{Efrat}), i.e. a partially ordered set admitting a minimum such that for any locality $\lambda \in \Lambda$ there is a unique chain to that minimum. \vspace{1mm}

\noindent
The main goal of the present paper is to improve this result on two levels. First of all, we generalise it from fields to not necessarily commutative rings (in this paper ring always means ring with unity). Secondly, we provide a uniform definition of $\leq$ and a uniform proof of the said tree structure. For this purpose we work with quasi-orderings. Essentially, quasi-orderings and localities are the same objects, except that the former class is defined axiomatically. Quasi-orderings were introduced in \cite{Fakh} for fields and in \cite{Muller} for commutative rings. \vspace{1mm}

\noindent
This paper is organised as follows: in section 2 we introduce quasi-orderings on rings and prove the dichotomy that they are either orderings or valuations. Section 3 deals with specific classes of quasi-orderings, namely special and Manis quasi-orderings. They provide a generalisation of the respective notions of ring valuations. In the fourth section, we uniformly define a coarsening relation $\leq$ on the set of all quasi-orderings on a ring $R,$ which generalises Efrat's one. The main goal of section 5 is to prove that, given a two-sided completely prime ideal $\mathfrak{q}$ of $R,$ the set of all quasi-orderings on $R$ with support $\mathfrak{q}$ forms a rooted tree. This investigation is continued in section 6, where we study the behaviour of special and Manis quasi-orderings with respect to $\leq.$ In the seventh section, we give applications of our tree structure theorem. We prove that the set $\mathcal{Q}(R)$ of all orderings and valuations on a ring $R$ can be realised by the spectrum of a commutative ring. Finally, in section 8, we establish that the coarsening relation $\leq$ yields a topology on $\mathcal{Q}(R).$ We conclude the paper by generalising the Harrison topology from the real spectrum to the quasi-real spectrum $\mathcal{Q}(R).$

\section{The Notion of Quasi-Ordered Rings} \label{sectionqorings}

\noindent
In his note \cite{Fakh}, Fakhruddin axiomatically introduced quasi-ordered fields $(K,\preceq).$ He established the dichotomy that any such field is either an ordered field or a valued field $(K,v)$ such that $x \preceq y \Leftrightarrow v(y) \leq v(x)$ for all $x,y \in K$ (\cite[Theorem 2.1]{Fakh}). This theorem was generalised to commutative rings in \cite{Muller} by first showing that any quasi-ordering on a commutative ring extends to its quotient field $K$ (modulo some canonical prime ideal), and then applying Fakhruddin's dichotomy to $K.$ The aim of the present section is to provide a proof of the said result that also applies to possibly non-commutative rings (Theorem \ref{dicho}). \vspace{1mm}

\begin{definition} \label{prelimi} \label{qo} A \textit{quasi-ordering} on a set $S$ is a binary, reflexive, transitive and total relation $\preceq$ on $S.$ Any quasi-ordering induces an equivalence relation $\sim_{\preceq} \, := \: \sim,$ given by
\[
\forall x,y \in S\colon x \sim y :\Leftrightarrow x \preceq y \textrm{ and } y \preceq x.
\]
We write $x \prec y$ if $x \preceq y$ but $y \not\preceq x,$ and we write $x \nsim y$ if $x \prec y$ or $y \prec x.$
\end{definition}

\begin{definition} \label{qoring2} Let $R$ be a ring, and $\preceq$ and $\sim$ as in Definition \ref{qo}.  \\
The tuple $(R,\preceq)$ is called a \textit{quasi-ordered ring} if for all $a,b,x,y,z \in R\colon$
\begin{itemize}
\item[(QR1)] $0 \prec 1,$ \vspace{1mm}

\item[(QR2)] If $0 \preceq a,b$ and  $x \preceq y,$ then $axb \preceq ayb,$ \vspace{1mm}

\item[(QR3)] If $0 \prec a,b$ and $axb \preceq ayb,$ then $ x \preceq y,$ \vspace{1mm}

\item[(QR4)] If $z \nsim y$ and $x \preceq y,$ then $x+z \preceq y+z.$ \vspace{1mm}
\end{itemize}

\noindent
The equivalence class of $0$ (w.r.t.$\: \sim$), denoted by $\mathfrak{q}_{\preceq},$ is called the \textit{support} of $\preceq.$
\end{definition}

\noindent
For the rest of this section, if not mentioned otherwise, $(R,\preceq)$ always denotes a quasi-ordered ring.

\begin{lemma} \label{lem1} Let $x,y \in R$ such that $x \nsim 0$ and $y \sim 0.$ Then $x+y \sim x.$ In particular, $z \sim 0$ if and only if $-z \sim 0$ for any $z \in R.$ 
\end{lemma}
\begin{proof}
Since $x \nsim 0$ and $y \sim 0,$ we obtain $x \nsim y.$ So from $y \preceq 0$ we deduce that $x+y \preceq x,$ and from $0 \preceq y$ we deduce that $x \preceq x+y.$ Thus, $x+y \sim x.$ \vspace{1mm}

\noindent 
If $z \sim 0,$ then $-z \nsim 0$ implies $0 = (-z)+z \sim -z \nsim 0,$ a contradiction. Therefore, $-z \sim 0.$ The converse is obviously also true due to symmetry reasons.
\end{proof}

\begin{lemma} \label{-+} If $x \preceq 0,$ then $0 \preceq -x.$
\begin{proof} Let $x \preceq 0.$ If $-x \prec 0,$ then $x \preceq 0$ implies via (QR4) that $0 \preceq -x,$ a contradiction. Hence, $0 \preceq -x.$
\end{proof}
\end{lemma}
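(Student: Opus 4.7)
The plan is a short argument by contradiction using totality of $\preceq$ together with axiom (QR4). The goal is to exploit the fact that (QR4) allows us to add the same element $z$ to both sides of an inequality $x \preceq y$, provided $z \nsim y$. If we choose $z = -x$ and $y = 0$, then $x + z = 0$ and $y + z = -x$, which is precisely the conclusion we want. The only thing we need to verify is the hypothesis $z \nsim y$, i.e., $-x \nsim 0$, and this will follow from the negation of the desired conclusion.

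In detail, I would proceed as follows. Assume toward a contradiction that $0 \npreceq -x$. Since $\preceq$ is total (Definition \ref{qo}), this forces $-x \prec 0$, and in particular $-x \nsim 0$. Now I would invoke (QR4) with the roles $x := x$, $y := 0$, $z := -x$: the hypothesis $x \preceq y$ is the given $x \preceq 0$, and the hypothesis $z \nsim y$ is $-x \nsim 0$, which we just obtained. The conclusion is $x + (-x) \preceq 0 + (-x)$, i.e., $0 \preceq -x$. This directly contradicts $-x \prec 0$, which by definition includes $-x \nsim 0$ (and hence rules out $0 \preceq -x$ when combined with $-x \preceq 0$).

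I do not anticipate any real obstacle here. The only small subtlety is that (QR4) needs $z \nsim y$ as a strict side condition (otherwise one would not expect additivity of $\preceq$ in general), and the slickest way to produce such a $z$ is to take $z$ itself to be the element whose sign we are trying to flip, deriving the needed $-x \nsim 0$ from the negated conclusion. Lemma \ref{lem1} is not used in this argument; it will presumably be needed for later results where $z$ is forced into $\mathfrak{q}$.
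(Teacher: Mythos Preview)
Your proof is correct and is essentially identical to the paper's: both assume $-x \prec 0$ (equivalently $0 \npreceq -x$, by totality), then apply (QR4) with $z = -x$ to the inequality $x \preceq 0$ to get $0 \preceq -x$, a contradiction. Your write-up is simply a bit more explicit about why $-x \nsim 0$ holds.
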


\begin{proposition} \label{prime} The support $\mathfrak{q}_{\preceq}$ is a two-sided completely prime ideal of $R.$
\begin{proof} Clearly, $0 \in \mathfrak{q}_{\preceq}.$ Now let $x,y \in \mathfrak{q}_{\preceq},$ and assume that $x+y \notin \mathfrak{q}_{\preceq}.$ So we have $x+y \nsim 0,$ but $-y \sim 0$ by Lemma \ref{lem1}. The same lemma yields the contradiction $$0 \sim x = (x+y)-y \sim x+y \nsim 0.$$ 
Next, suppose that $x \in \mathfrak{q}_{\preceq}$ and $r \in R.$ If $0 \preceq r,$ then evidently $xr \in \mathfrak{q}_{\preceq}$ by (QR2). If $r \preceq 0,$ then Lemma \ref{lem1} and Lemma \ref{-+} imply $0 \preceq -r$ and $-x \sim 0.$ Therefore, $xr=(-x)(-r) \sim 0,$ so again $xr \in \mathfrak{q}_{\preceq}.$ Analogously, it follows that $rx \in \mathfrak{q}_{\preceq}.$ Thus, $\mathfrak{q}_{\preceq}$ is a two-sided ideal of $R.$ \vspace{1mm}

\noindent 
It remains to show that $\mathfrak{q}_{\preceq}$ is completely prime. Axiom (QR1) states that $1 \notin \mathfrak{q}_{\preceq},$ i.e. $\mathfrak{q}_{\preceq} \neq R.$ Finally, assume that $xy \in \mathfrak{q}_{\preceq},$ but $x,y \notin \mathfrak{q}_{\preceq}.$ By Lemma \ref{lem1}, also $-xy \in \mathfrak{q}_{\preceq}.$ So we may w.l.o.g. assume that $0 \prec x,$ as if $x \prec 0,$ then $0 \prec -x$ (Lemma \ref{-+}). But if $0 \prec x,$ then (QR3) yields the contradiction $y \in \mathfrak{q}_{\preceq}.$ Hence, $\mathfrak{q}_{\preceq}$ is completely prime. 
\end{proof}
\end{proposition}

\noindent
We first show that the quasi-orderings $\preceq$ on $R$ such that $-1 \prec 0$ are precisely the orderings on $R.$

\begin{lemma} \label{sign} Let $x \in R$ and suppose that $-1 \prec 0.$ Then $0 \preceq x$ if and only if $-x \preceq 0.$ Moreover, either $x \prec -x$ or $-x \prec x$ for any $x \in R\backslash\mathfrak{q}_{\preceq}.$
\begin{proof} If $x \preceq 0,$ then $0 \preceq -x$ by Lemma \ref{-+}. On the other hand, if $0 \preceq x,$ then $-x \preceq 0$ by the assumption $-1 \prec 0$ and axiom (QR2). The second assertion follows immediately from the first one and the fact that $x,-x \nsim 0.$
\end{proof}
\end{lemma}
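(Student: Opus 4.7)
The plan is to split the biconditional into its two directions and then deduce the dichotomy of the second assertion as a corollary. The direction $x \preceq 0 \Rightarrow 0 \preceq -x$ is already the content of Lemma \ref{-+} and requires no further work; in fact it does not even use the hypothesis $-1 \prec 0$.

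For the converse $0 \preceq x \Rightarrow -x \preceq 0$, I would invoke (QR2). The key observation is that, although $-1$ cannot appear as a \emph{multiplier} in (QR2) (which demands non-negative multipliers), it is perfectly allowed to appear as the element being multiplied inside the valid inequality $-1 \preceq 0$. Choosing $a := 1$ and $b := x$, both of which are non-negative---the former by (QR1), the latter by hypothesis---axiom (QR2) applied to $-1 \preceq 0$ yields
\[
-x \;=\; 1 \cdot (-1) \cdot x \;\preceq\; 1 \cdot 0 \cdot x \;=\; 0,
\]
which is exactly the desired inequality.

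For the second assertion, fix $x \in R \setminus \mathfrak{q}$, so that $x \not\sim 0$; by the contrapositive of Lemma \ref{lem1}(2) (applied to $-x$, using that $-(-x) = x$) we also have $-x \not\sim 0$. By totality either $0 \prec x$ or $x \prec 0$, and in either case the first assertion, already proved, places $x$ and $-x$ on strictly opposite sides of $0$. Hence $x \not\sim -x$, and totality then forces exactly one of $x \prec -x$ or $-x \prec x$ to hold.

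The only subtle point in the whole argument is recognising the ``sign-flip trick'' inside the non-trivial implication: one must rewrite $-x$ as $1 \cdot (-1) \cdot x$ so that the negative factor becomes the object being compared rather than a multiplier, and then notice that the hypothesis $0 \preceq x$ is precisely what permits $x$ itself to serve as the admissible right multiplier in (QR2). Once that observation is made, both halves of the proof collapse to one-line appeals to Lemma \ref{-+}, axiom (QR2), and Lemma \ref{lem1}.
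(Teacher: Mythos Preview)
Your proof is correct and follows essentially the same route as the paper's: one direction is Lemma \ref{-+}, the other is (QR2) applied to $-1 \preceq 0$ with the non-negative multipliers $1$ and $x$, and the dichotomy is read off from the fact that $0$ lies strictly between $x$ and $-x$. The only difference is that you spell out the choice of multipliers in (QR2) and the appeal to Lemma \ref{lem1}(2) explicitly, whereas the paper leaves these implicit.
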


\begin{definition}  \label{oring} Let $\leq$ be a binary, reflexive, transitive and total relation on $R.$ Then $(R,\leq)$ is called an \textit{ordered ring} if for all $a,b,x,y,z \in R\colon$ 
\begin{itemize}
 \item[(O1)] $0 < 1,$ \vspace{1mm}

 \item[(O2)] If $0 \leq a,b$ and $x \leq y,$ then $axb \leq ayb,$ \vspace{1mm}

 \item[(O3)] If $xy \leq 0,$ then $x \leq 0 \textrm{ or } y \leq 0,$ \vspace{1mm}

 \item[(O4)] If $x \leq y,$ then $x+z \leq y+z.$
 \end{itemize}
\end{definition}

\begin{remark} The corresponding positive cone $P:= \{x \in R\colon 0 \leq x\}$ fulfils the axioms $P+P \subseteq P, PP \subseteq P,$ $P \cup -P = R$ and that $P \cap -P =: \mathfrak{q}_P$ is a completely prime ideal of $R.$
\end{remark}

\begin{proposition} \label{dichoord1} Any ordered ring is a quasi-ordered ring.
\begin{proof} We only have to verify (QR3). So suppose that $0 < a,b$ and $axb \leq ayb.$ The latter implies $a(x-y)b \leq 0.$ Hence, from (O3) and the assumption on $a$ and $b,$ it follows that $x-y \leq 0,$ i.e. $x \leq y.$
\end{proof}
\end{proposition}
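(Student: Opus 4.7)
The plan is to check each of the four axioms (QR1)--(QR4) of Definition \ref{qoring2} for the relation $\preceq \: := \: \leq$ inherited from the ordered-ring structure. Since $\leq$ is by Definition \ref{oring} already binary, reflexive, transitive and total, the preliminary requirements of Definition \ref{qo} are met for free.

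Three of the four axioms require essentially nothing: (QR1) and (QR2) are verbatim copies of (O1) and (O2), while (QR4) is a restriction of (O4), which carries no side condition on $z$ at all. So the entire content of the proposition sits in (QR3).

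For (QR3), I would start from the hypotheses $0 \prec a,b$ and $axb \preceq ayb$ and use (O4) to absorb $-ayb$ on both sides, obtaining $a(x-y)b \leq 0$. Reading this as $(a(x-y))\cdot b \leq 0$ and invoking (O3) once gives $a(x-y) \leq 0$ or $b \leq 0$; the second alternative is incompatible with $0 \prec b$, since together with $0 \leq b$ it would force $b \sim 0$. A second application of (O3) to $a(x-y) \leq 0$, together with $0 \prec a$, then rules out $a \leq 0$ and leaves $x - y \leq 0$, from which a final use of (O4) (adding $y$) yields $x \leq y$.

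I do not anticipate a genuine obstacle: the argument is essentially a bookkeeping exercise. The only care needed is in the iterated application of (O3) and the translation of the strict statements $0 \prec a$, $0 \prec b$ into the non-equivalences $a \nsim 0$, $b \nsim 0$ used to discard the unwanted disjuncts.
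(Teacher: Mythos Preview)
Your proposal is correct and follows essentially the same route as the paper's proof: reduce to (QR3), pass from $axb \leq ayb$ to $a(x-y)b \leq 0$ via (O4), apply (O3) (you do so twice explicitly, the paper compresses this into one sentence), and translate back to $x \leq y$. The only difference is level of detail; the underlying argument is identical.
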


\begin{proposition} \label{dichoord2} Any quasi-ordered ring $(R,\preceq)$ with $-1 \prec 0$ is an ordered ring.
\begin{proof} It suffices to verify axiom (O4). So suppose that $x \preceq y.$ This implies that $x-y \preceq 0.$ Indeed, either $-y \sim 0$ and then $x-y \sim x \preceq y \sim 0$ by Lemma \ref{lem1}; or $-y \not\sim 0$ and then $y \not\sim -y$ by Lemma \ref{sign}, so $x-y \preceq 0$ follows from $x \preceq y$ via axiom (QR4). The same reasoning shows that $x-y \preceq 0$ implies $x+z \preceq y+z.$ 
\end{proof}
\end{proposition}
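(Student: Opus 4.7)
The plan is to verify the four axioms (O1)--(O4) of an ordered ring, given (QR1)--(QR4) and the hypothesis $-1 \prec 0$. Axioms (O1) and (O2) are literally (QR1) and (QR2), so the real work lies with (O3) and (O4).

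For (O3), I would argue as follows: suppose $xy \preceq 0$. If $x \preceq 0$ there is nothing to prove, so by totality I may assume $0 \prec x$. Rewriting $xy = x \cdot y \cdot 1$ and $0 = x \cdot 0 \cdot 1$, the hypothesis reads $x \cdot y \cdot 1 \preceq x \cdot 0 \cdot 1$. Since $0 \prec x$ and $0 \prec 1$ (the latter by (QR1)), axiom (QR3) immediately yields $y \preceq 0$, which finishes (O3).

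For (O4), my plan is first to establish the auxiliary equivalence
\[
x \preceq y \; \Longleftrightarrow \; x - y \preceq 0,
\]
from which (O4) follows at once because $(x+z)-(y+z) = x-y$ in any ring. To prove the equivalence I case-split on whether $y \sim 0$. If $y \sim 0$, Lemma \ref{lem1} gives $-y \sim 0$ and $x - y \sim x$, so by transitivity each side of the equivalence reduces to $x \preceq 0$. If $y \not\sim 0$, Lemma \ref{sign} (which is where the hypothesis $-1 \prec 0$ genuinely enters) yields $y \not\sim -y$, and Lemma \ref{lem1}(2) also gives $-y \not\sim 0$; the forward direction now follows by applying (QR4) to $x \preceq y$ with perturbation $-y$, and the backward direction by applying (QR4) to $x - y \preceq 0$ with perturbation $y$.

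The main obstacle will be orchestrating this case split cleanly. In particular, the hypothesis $z \nsim y$ required by (QR4) is controlled only through Lemma \ref{sign}, so the assumption $-1 \prec 0$ is used essentially, not merely cosmetically. The two subcases are otherwise routine, amounting to bookkeeping with the elementary identities of Lemma \ref{lem1} and the cancellation $-y + y = 0$.
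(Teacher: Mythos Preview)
Your proposal is correct and follows essentially the same route as the paper: the same case split (on whether $y$, equivalently $-y$, lies in the support) and the same use of Lemma~\ref{sign} to feed (QR4), packaged via the equivalence $x\preceq y \Leftrightarrow x-y\preceq 0$. The only difference is that you spell out (O3) explicitly from (QR3), whereas the paper absorbs this into the phrase ``it suffices to verify (O4)''; your extra paragraph is harmless and arguably clearer.
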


\noindent
It remains to show that the quasi-orderings with $0 \prec -1$ correspond to valuations. 

\begin{definition} \label{defvalring2} Let $(\Gamma,+,\leq)$ be a totally ordered, cancellative monoid, and $\infty$ a symbol such that $\infty > \Gamma$ and $\infty = \infty + \infty = \gamma + \infty = \infty + \gamma$ for all $\gamma \in \Gamma.$ \\
A surjective map $v\colon R \to \Gamma \cup \{\infty\}$ is called a \textit{valuation} on $R$ if for all $x,y \in R\colon$
\begin{itemize}
 \item[(V1)] $v(0) = \infty,$ \vspace{1mm}

 \item[(V2)] $v(1) = 0,$ \vspace{1mm}

 \item[(V3)] $v(xy) = v(x)+v(y),$ \vspace{1mm}

 \item[(V4)] $v(x+y) \geq \min\{v(x),v(y)\}.$
\end{itemize}
We call $\Gamma_v := \Gamma$ the \textit{value group} of $v,$ and $\mathfrak{q}_v := v^{-1}(\infty)$ the \textit{support} of $v.$
\end{definition}

\begin{remark} \label{remonval} Valuations on commutative rings are usually defined by demanding that $v$ maps into the group generated by $v(R\backslash\mathfrak{q}_v).$ In fact, these two definitions of a valuation are equivalent. If $R$ is commutative and if $v$ is a valuation on $R$ in the sense of Definition \ref{defvalring2}, then $\Gamma_v$ embeds in an ordered group (\cite[Corollary 1]{Keha}). Conversely, $v(R\backslash\mathfrak{q}_v)$ is a monoid for any map $v$ on $R$ that satisfies the axioms (VR1)\,-\,(VR4). \vspace{1mm}

\noindent
However, as was proven by Mal'cev, Chihata and Vinogradov, there are ordered cancellative monoids which cannot be embedded in any ordered group (cf. \cite[Theorem 9.8]{Lam2}). Consequently, whenever we consider possibly non-commutative valued rings, we stick to Definition \ref{defvalring2}. For valuations on commutative rings, we occasionally suppose that $v$ maps into the group $\Gamma_v$ for the sake of convenience.
\end{remark}

\begin{proposition} \label{dichoval1}
Let $(R,v)$ be a valued ring. Then $(R,\preceq)$ is a quasi-ordered ring with support $\mathfrak{q}_v,$ where 
$x \preceq y :\Leftrightarrow v(y) \leq v(x)$ for all $x,y \in R.$
\begin{proof}
Clearly, $\preceq$ is reflexive, transitive and total, since the ordering $\leq$ on $\Gamma$ has these properties. Further note that $v(1) = 0 < \infty = v(0),$ thus $0 \prec 1.$ This shows that (QR1) is fulfilled. Next, we establish (QR2). From $x \preceq y$ it follows that $v(y) \leq v(x).$ Hence, $$v(ayb) = v(a)+v(y)+v(b) \leq v(a)+v(x)+v(b) = v(axb),$$ and therefore $axb \preceq ayb.$ For the verification of (QR3), let $0 \prec a,b$ and suppose that $y \prec x,$ i.e. $v(a),v(b) < \infty$ and $v(x) < v(y).$ Then $v(axb) \leq v(ayb).$ Note that equality cannot hold, because then it follows via (V3) and cancellation in $\Gamma$ that $v(x) = v(y),$ a contradiction. Hence, 
$v(axb) < v(ayb),$ and therefore $ayb \prec axb.$ For the proof of (QR4) we refer to the commutative case, cf. \cite[Lemma 3.7]{Muller}. \vspace{1mm}

\noindent
Finally, $x \in \mathfrak{q}_{\preceq}$ if and only if $v(x) = v(0) = \infty,$ i.e. if and only if $x \in \mathfrak{q}_v.$
\end{proof}
\end{proposition}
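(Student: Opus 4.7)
The plan is to translate each quasi-ordering axiom into the corresponding valuation-theoretic statement and verify it using (V1)--(V4), the cancellativity of $\Gamma$, and the standard ultrametric trick. Reflexivity, transitivity, and totality of $\preceq$ are immediate from the analogous properties of $\leq$ on $\Gamma \cup \{\infty\}$, and (QR1) follows at once from $v(1)=0<\infty=v(0)$.

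For (QR2), given $x\preceq y$ so that $v(y)\leq v(x)$, I would add $v(a)$ on the left and $v(b)$ on the right, using (V3) to rewrite $v(axb)$ and $v(ayb)$; preservation of the order under left and right translation in the ordered semigroup $\Gamma\cup\{\infty\}$ then yields $v(ayb)\leq v(axb)$, i.e.\ $axb\preceq ayb$. For (QR3), the hypothesis $0\prec a,b$ forces $v(a),v(b)\in\Gamma$, and from $v(ayb)\leq v(axb)$ I would cancel $v(a)$ on the left and $v(b)$ on the right in $\Gamma$ to get $v(y)\leq v(x)$; here cancellativity of $\Gamma$ is doing the real work, and this is the reason why $0\prec a,b$ (rather than merely $0\preceq a,b$) is needed in the axiom.

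The main obstacle is (QR4), since the ultrametric inequality (V4) only gives a one-sided bound and one must upgrade it to an equality when the valuations differ. My plan is to observe that $z\not\sim y$ is equivalent to $v(y)\neq v(z)$, and then split into cases. If $v(y)<v(z)$, the standard argument shows $v(y+z)=v(y)$, and since $v(y)\leq v(x)$ I can bound $v(x+z)\geq\min\{v(x),v(z)\}\geq v(y)=v(y+z)$. If $v(z)<v(y)\leq v(x)$, then $v(y+z)=v(z)=v(x+z)$ by the same principle applied to each sum. Either way, $v(y+z)\leq v(x+z)$, hence $x+z\preceq y+z$.

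Finally, the support statement is a direct unraveling of definitions: $x\in\mathfrak{q}_{\preceq}$ means $x\sim 0$, i.e.\ $v(x)=v(0)=\infty$, which is $x\in\mathfrak{q}_v$. I would structure the write-up as a checklist through (QR1)--(QR4) followed by the one-line verification of the support, isolating the case analysis for (QR4) as the only non-trivial bookkeeping.
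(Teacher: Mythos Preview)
Your proposal is correct and follows essentially the same route as the paper: both verify (QR1)--(QR4) by translating into valuation-theoretic statements via (V1)--(V4) and the order/cancellation structure of $\Gamma$, and both handle the support by unwinding the definition. The only cosmetic differences are that the paper proves (QR3) by contrapositive whereas you cancel directly (the same cancellativity is doing the work either way), and that the paper outsources (QR4) to a citation of the commutative case while you write out the standard ultrametric case split $v(y)<v(z)$ versus $v(z)<v(y)$ in full. Your version is therefore self-contained for (QR4), which is a small bonus; otherwise the arguments coincide.
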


\noindent
It remains to prove that any quasi-ordering $\preceq$ on $R$ with $0 \prec -1$ is induced by a valuation $v$ on $R.$ So suppose for the rest of this section that $0 \prec -1.$

\begin{lemma} \label{positive} We have $0 \preceq x$ for all $x \in R.$ 
\begin{proof} Suppose that $x \prec 0$ for some $x.$ Then $0 \preceq -x$ by Lemma \ref{-+}, so from the assumption $0 \prec -1$ and axiom (QR2) it follows $0 \preceq x,$ a contradiction.
\end{proof}
\end{lemma}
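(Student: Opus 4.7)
The plan is a short proof by contradiction. Suppose, towards a contradiction, that some $x \in R$ satisfies $x \prec 0$. The only tools available at this point in the paper are the four axioms (QR1)--(QR4), Lemma \ref{lem1}, Lemma \ref{-+}, and the standing hypothesis $0 \prec -1$; so I want to cook up from these a chain forcing $0 \preceq x$, which will contradict $x \prec 0$.

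The natural first step is to invoke Lemma \ref{-+}, which converts $x \preceq 0$ into $0 \preceq -x$. Now I want to multiply through by $-1$. Axiom (QR2) is applicable because $0 \prec -1$ and $0 \prec 1$ (the latter by (QR1)) both give elements $\succeq 0$; applied to the inequality $0 \preceq -x$ with $a = -1$ and $b = 1$, it yields $(-1)\cdot 0 \cdot 1 \preceq (-1)(-x)\cdot 1$. In any unital ring the left-hand side is $0$ and the right-hand side is $x$, so I conclude $0 \preceq x$, contradicting $x \prec 0$.

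There is no real obstacle here: the proof is essentially a one-line application of Lemma \ref{-+} followed by (QR2) with the "positive" element $-1$. The only thing to be slightly careful about is that (QR2) requires both flanking factors to be $\succeq 0$, so one must remember to cite $0 \prec 1$ from (QR1) alongside the standing hypothesis $0 \prec -1$ when multiplying on both sides.
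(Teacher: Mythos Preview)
Your proof is correct and follows exactly the same route as the paper: assume $x \prec 0$, apply Lemma \ref{-+} to get $0 \preceq -x$, then use (QR2) with the standing hypothesis $0 \prec -1$ to multiply through and obtain $0 \preceq x$, a contradiction. Your added remark that (QR1) supplies $0 \prec 1$ for the second flanking factor in (QR2) is a helpful clarification that the paper leaves implicit.
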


\begin{corollary} \label{cor10} \hspace{7cm}
\begin{enumerate}
\item If $x \preceq y,$ then $ax \preceq ay$ for all $a,b,x,y \in R.$ \vspace{1mm}

\item $x \sim -x$ for all $x \in R.$
\end{enumerate}
\end{corollary}
\begin{proof} (1) is trivial by Lemma \ref{positive} and (QR2). For (2), note that $-1 \preceq 1$ or $1 \preceq -1$ by the totality of $\preceq.$ Applying (QR2) with $0 \prec -1$ yields that the converse also holds, whence $-1 \sim 1.$ Since $\sim$ is preserved under multiplication according to (1), this implies $-x \sim x$ for all $x \in R.$
\end{proof}

\noindent
Let $H:= (R\backslash\mathfrak{q}_{\preceq})/{\sim}.$ We equip $H$ with the addition $[x]+[y] := [xy]$ and the ordering $[x] \leq [y] :\Leftrightarrow y \preceq x.$

\begin{lemma} \label{cancel} $(H,+,\leq)$ is a totally ordered, cancellative monoid.
\begin{proof} We first show that $+$ is well-defined. So suppose that $[x] = [x']$ and $[y] = [y'],$ i.e. $x \sim x'$ and $y \sim y'.$ From $x \sim x'$ it follows $xy' \sim x'y',$ and from $y \sim y'$ it follows $xy \sim xy'$ by Corollary \ref{cor10}(1). Hence, $xy \sim xy' \sim x'y',$ and therefore $$[x]+[y] = [xy] = [x'y'] = [x']+[y'].$$

\noindent
By axiom (QR1), $1 \in R\backslash \mathfrak{q}_{\preceq},$ and $[1] \in H$ is easily seen to be the neutral element of $H.$ It is also clear that $+$ is associative, whence $(H,+)$ is a monoid. \vspace{1mm}

\noindent
\noindent
Next, we show that $H$ is totally ordered by $\leq.$ We first prove that $\leq$ is well-defined. So let $[x] \leq [y],$ and let $a \in [x]$ and $b \in [y],$ i.e. $a \sim x$ and $b \sim y.$ Then 
$$[x] \leq [y] \Leftrightarrow y \preceq x \Leftrightarrow b \preceq a \Leftrightarrow [a] \leq [b].$$
It is clear that $\leq$ is reflexive, transitive and total. Moreover, $\leq$ is anti-symmetric because if $[x] \leq [y]$ and $[y] \leq [x],$ then $x \sim y,$ i.e. $[x] = [y].$ Now suppose that $[x] \leq [y].$ Then $y \preceq x,$ whence $ayb \preceq axb$ by Corollary \ref{cor10}(1). Consequently, $$[a]+[x]+[b] = [axb] \leq [ayb] = [a]+[y]+[b].$$

\noindent
Finally, suppose that $[axb] \leq [ayb].$ Then $ayb \preceq axb,$ so Corollary \ref{cor10}(1) and (QR3) yield $x \sim y,$ i.e. $[x] = [y].$ Hence, $(H,\leq)$ is cancellative.
\end{proof}
\end{lemma}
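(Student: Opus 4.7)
The plan is to verify in sequence the four requirements: well-definedness of the multiplication on $H$, the semigroup (monoid) structure, well-definedness and total-ordering of $\leq$, and compatibility together with cancellation.

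The only genuinely non-routine point is that $R \setminus \mathfrak{q}_\preceq$ is closed under multiplication (the author's sketch implicitly assumes this). Given $x, y \not\sim 0$, Lemma \ref{positive} (available since we are in the case $0 \prec -1$) gives $0 \prec x, y$, and (QR1) gives $0 \prec 1$. If we had $xy \sim 0$, writing $xy = x \cdot y \cdot 1$ and $0 = x \cdot 0 \cdot 1$ and applying (QR3) with $a = x, b = 1$ to both $xy \preceq 0$ and $0 \preceq xy$ would force $y \sim 0$, a contradiction. Once closure is established, well-definedness of $[x] \cdot [y] := [xy]$ follows from two applications of (QR2): if $x \sim x'$ and $y \sim y'$ (all non-negative by Lemma \ref{positive}), then $xy \sim xy' \sim x'y'$.

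Associativity of $\cdot$ is inherited from $R$, and $[1] \in H$ (by (QR1)) is a two-sided identity, so $(H, \cdot)$ is a semigroup (in fact, a monoid). For $\leq$, well-definedness is a brief equivalence chain using transitivity of $\sim$; reflexivity, transitivity and totality transfer directly from $\preceq$ via the reversed definition $[x] \leq [y] :\Leftrightarrow y \preceq x$; and anti-symmetry is immediate, since $[x] \leq [y]$ and $[y] \leq [x]$ together mean $x \sim y$, i.e.\ $[x] = [y]$.

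Monotonicity of $\leq$ with respect to $\cdot$ is a direct translation of (QR2), with the hypothesis $0 \preceq a, b$ supplied by Lemma \ref{positive}. Cancellation is the analogous translation of (QR3): for $a, b \in H$ one has $0 \prec a, b$, so $[axb] = [ayb]$ yields $axb \sim ayb$, whence (QR3) gives $x \sim y$, i.e.\ $[x] = [y]$. I expect the closure step above to be the principal technical point, since it is the only place where (QR3) is invoked in a non-obvious way; the rest is essentially a transcription of the ring-level axioms into the quotient $H$.
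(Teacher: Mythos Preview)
Your proof is correct and follows essentially the same approach as the paper's. The one notable addition is your explicit verification that $R \setminus \mathfrak{q}_\preceq$ is closed under multiplication; the paper's proof of this lemma silently assumes closure and only supplies that argument later, in the proof of Proposition~\ref{dichoval2} (where it notes that $xy \in \mathfrak{q}_\preceq$ iff $x \in \mathfrak{q}_\preceq$ or $y \in \mathfrak{q}_\preceq$, by (QR3) and Lemma~\ref{positive}). Apart from this, the two proofs are step-for-step identical.
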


\noindent
We we will verify in Proposition \ref{dichoval2} that the map 
\begin{align*} v\colon R \to H \cup \{\infty\}, \, x \mapsto \left\{ \begin{array}{cc} [x], & x \in R\backslash\mathfrak{q}_{\preceq} \\ \infty, & x \in \mathfrak{q}_{\preceq} \end{array} \right.
\end{align*}
is a valuation on $R$ that induces $\preceq.$

\begin{lemma} Let $(R,\preceq)$ be a quasi-ordered ring. Then $x+y \preceq \max\{x,y\}$ for all $x,y \in R.$ \label{lemmaineq}
\end{lemma}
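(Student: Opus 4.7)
The plan is to use totality of $\preceq$ to assume without loss of generality $x \preceq y$, so $\max\{x,y\} = y$ up to $\sim$ and it suffices to prove $x+y \preceq y$. If $y \sim 0$, then positivity (Lemma \ref{positive}) forces $x \sim 0$, whence $x+y \sim y$ by Lemma \ref{lem1}. The substantive case is $y \nsim 0$, which I attack by contradiction: assume $y \prec x+y$ and aim to derive $y \sim x+y$.

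A preliminary fact needed below is that $-u \sim u$ for every $u \in R$. By totality, either $1 \preceq -1$ or $-1 \preceq 1$; in either case, (QR2) applied with $a = 1, b = -1$ (both nonnegative thanks to the running assumption $0 \prec -1$) reverses the given inequality, which combined with the original forces $1 \sim -1$. A further application of (QR2) with factors $1$ and $u$ (the latter nonnegative by positivity) then upgrades $1 \sim -1$ to $u \sim -u$.

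The core step is the key claim that \emph{$b \prec a$ implies $a+b \sim a$}. For $a+b \preceq a$: apply (QR4) first with $u = 0$, $y' = -b$, $z = a$, which is legal because $-b \sim b \prec a$ gives $a \nsim -b$ and $0 \preceq -b$ by positivity, yielding $a \preceq a - b$; then apply (QR4) with $u = a$, $y' = a-b$, $z = b$, legal since $a-b \succeq a \succ b$ gives $b \nsim a-b$, yielding $a+b \preceq a$. The reverse inequality $a \preceq a+b$ follows from (QR4) applied to $0 \preceq b$ with $z = a$, legal because $a \nsim b$ from $b \prec a$.

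To finish, I apply the key claim with $a = x+y$ and $b = -y$: its hypothesis $-y \prec x+y$ holds via $-y \sim y \prec x+y$, and its conclusion is $x = (x+y)+(-y) \sim x+y$, hence $x+y \preceq x \preceq y$, contradicting $y \prec x+y$. The main obstacle is the subcase $x \sim y$, where one cannot apply (QR4) directly with $z \in \{x,y\}$ to compare $x+y$ with $y$, because then $z$ is equivalent to whichever of $x,y$ plays the role of ``$y'$'' in the axiom. The key claim circumvents this by operating on $a = x+y$ instead, and it relies crucially on the preliminary $-y \sim y$ to legitimise its strict-inequality hypothesis in that subcase.
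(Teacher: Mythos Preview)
Your proof is correct. The paper does not actually supply an argument here but simply cites Fakhruddin's field case \cite[Lemma 4.1]{Fakh}; your self-contained derivation---first establishing $-u \sim u$ from $0 \prec -1$ via (QR2), then the key claim $b \prec a \Rightarrow a+b \sim a$ by two careful applications of (QR4), and finally instantiating with $a = x+y$, $b = -y$---is precisely the kind of reasoning that reference encapsulates, carried out at the ring level without any appeal to multiplicative inverses. In that sense you have done more than the paper: where it only points to the literature, you have written out a proof that works verbatim in the present non-commutative ring setting.
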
 
\begin{proof}
As in the field case, see \cite[Lemma 4.1]{Fakh}. 
\end{proof}

\begin{proposition} \label{dichoval2} If $(R,\preceq)$ is a quasi-ordered ring with $0 \prec -1,$ then there is a unique valuation $v$ on $R$ (up to equivalence) such that $x \preceq y \Leftrightarrow v(y) \leq v(x).$ Moreover, $\mathfrak{q}_{\preceq} = \mathfrak{q}_v.$
\begin{proof} We have already proven that $\Gamma_v = H$ is an ordered, cancellative monoid (Lemma \ref{cancel}). By construction, $v$ is obviously surjective and satisfies $v(0) = \infty$ and $v(1) = [1] = 0_H.$ Thus, (V1) and (V2) are fulfilled. For (V3) note that $xy \in \mathfrak{q}_{\preceq}$ if and only if $x \in \mathfrak{q}_{\preceq}$ or $y \in \mathfrak{q}_{\preceq}$ (Proposition \ref{prime}), so we w.l.o.g. assume that $x,y \notin \mathfrak{q}_{\preceq}.$ So $v(xy) = [xy] = [x]+[y] = v(x)+v(y),$ as desired. The axiom (V4) is immediately implied by the previous lemma. Hence, $v$ is a valuation. \vspace{2mm}

\noindent
Moreover (again w.l.o.g. $x,y \in R\backslash \mathfrak{q}_{\preceq}$), $$x \preceq y \Leftrightarrow [y] \leq [x] \Leftrightarrow v(y) \leq v(x),$$
and if $w$ is another such valuation, then $w(y) \leq w(x)$ if and only if $v(y) \leq v(x),$ i.e. $v$ and $w$ are equivalent. Finally, $$x \in \mathfrak{q}_v \Leftrightarrow v(x) = v(0) \Leftrightarrow x \sim 0 \Leftrightarrow x \in \mathfrak{q}_{\preceq},$$
i.e. $\mathfrak{q}_v = \mathfrak{q}_{\preceq}.$ This finishes the proof.
\end{proof}
\end{proposition}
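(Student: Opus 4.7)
The plan is to take the map $v : R \to H \cup \{\infty\}$ displayed just before the statement and verify each valuation axiom, leveraging the work already done. By Lemma \ref{cancel} (and the switch to additive notation), $H$ is a totally ordered, cancellative semigroup, so the target of $v$ has the right structure; surjectivity is built into the definition. Axioms (V1) and (V2) hold by construction, since $v(0) = \infty$ and $v(1) = [1] = 0_H$ (the neutral element of $H$ rewritten additively). Axiom (V4) is an immediate reformulation of Lemma \ref{lemmaineq}: from $x+y \preceq \max\{x,y\}$ the inequality $v(x+y) \geq \min\{v(x),v(y)\}$ follows after unpacking the definition $a \preceq b \Leftrightarrow v(b) \leq v(a)$, and checking the cases where one of $x,y$ lies in $\mathfrak{q}_\preceq$ (where $x + y \sim y$ by Lemma \ref{lem1}(1)).

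The main work, and the only substantive obstacle, is (V3). The definition of $v$ forces a split on whether the factors lie in the support, so the first thing to establish is that $\mathfrak{q}_\preceq$ is \emph{completely prime} as a multiplicative set, i.e.\ $xy \in \mathfrak{q}_\preceq$ if and only if $x \in \mathfrak{q}_\preceq$ or $y \in \mathfrak{q}_\preceq$. For the ``if'' direction, Lemma \ref{positive} gives $0 \preceq $ everything, so (QR2) applied to $0 \preceq x \preceq 0$ (say) and $0 \preceq 1,y$ sandwiches $xy$ between two copies of $0$, giving $xy \sim 0$. For the converse, if $x,y \notin \mathfrak{q}_\preceq$, then $0 \prec x$ and $0 \prec y$, so (QR3) applied to $x \cdot 1 \cdot y$ versus $x \cdot 0 \cdot y = 0$ would convert $xy \sim 0$ into $1 \sim 0$, contradicting (QR1).

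Once primeness is in hand, (V3) reduces to two cases: when both $x,y \notin \mathfrak{q}_\preceq$, equality $v(xy) = [xy] = [x] + [y] = v(x) + v(y)$ is the definition of addition on $H$; when either factor lies in $\mathfrak{q}_\preceq$, both sides equal $\infty$ by primeness and the convention on $\infty$. Thus $v$ is a valuation.

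It remains to read off the compatibility with $\preceq$ and the identification of supports. For $x,y \in R \setminus \mathfrak{q}_\preceq$, by the definition of the ordering on $H$ we have $x \preceq y \Leftrightarrow [y] \leq [x] \Leftrightarrow v(y) \leq v(x)$; the cases with a factor in $\mathfrak{q}_\preceq$ reduce to $\infty$ being the maximum, together with Lemma \ref{lem1} to handle $\sim$. Finally, $x \in \mathfrak{q}_v \Leftrightarrow v(x) = \infty = v(0) \Leftrightarrow x \sim 0 \Leftrightarrow x \in \mathfrak{q}_\preceq$, closing the proof.
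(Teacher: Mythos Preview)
Your proposal is correct and follows essentially the same route as the paper: both define $v$ via $H$, invoke Lemma~\ref{cancel} for the structure of $H$, verify (V1)--(V4) in the same way (with (V3) hinging on primeness of $\mathfrak{q}_\preceq$ via (QR3) and Lemma~\ref{positive}, and (V4) via Lemma~\ref{lemmaineq}), and then read off the compatibility and the equality of supports. The only difference is that you spell out the primeness argument and the boundary cases in more detail than the paper does.
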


\noindent
The Propositions \ref{dichoord1}, \ref{dichoord2}, \ref{dichoval1} and \ref{dichoval2} imply:

\begin{theorem} \label{dicho} Let $R$ be a ring and let $\preceq$ be a binary relation on $R.$ Then $(R, \preceq)$ is a quasi-ordered ring if and only if it is either an ordered ring or there is a unique valuation $v$ on $R$ (up to equivalence) such that $x \preceq y \Leftrightarrow v(y) \leq v(x).$ \\ Moreover, the support of the quasi-ordering coincides with the support of the ordering, respectively with the support of the valuation.
\end{theorem}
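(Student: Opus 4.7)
The plan is to assemble the theorem from the four dichotomy propositions just established. The ``if'' direction is immediate: if $(R,\leq)$ is an ordered ring, Proposition \ref{dichoord1} shows it is a quasi-ordered ring; and if $v$ is a valuation on $R$, Proposition \ref{dichoval1} shows that the induced relation $x \preceq y :\Leftrightarrow v(y) \leq v(x)$ is a quasi-ordering with support $\mathfrak{q}_v$.

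For the ``only if'' direction, the key observation is that which case of the dichotomy occurs is governed by the position of $-1$ relative to $0$. By totality, at least one of $-1 \prec 0$, $-1 \sim 0$, $0 \prec -1$ holds, and the middle option is ruled out: if $-1 \sim 0$, then Lemma \ref{lem1}(1) applied with $x := -1$ and $y := 1$ yields $0 = -1+1 \sim 1$, contradicting axiom (QR1). Hence exactly one of $-1 \prec 0$ or $0 \prec -1$ occurs.

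In the first case, Proposition \ref{dichoord2} gives that $(R,\preceq)$ is an ordered ring. In the second case, Proposition \ref{dichoval2} produces a valuation $v$ on $R$ with $x \preceq y \Leftrightarrow v(y) \leq v(x)$ and $\mathfrak{q}_v = \mathfrak{q}_{\preceq}$. The coincidence of supports in the ``moreover'' clause is contained in each of these propositions, so no additional argument is needed. Since all of the structural work has already been carried out in the preparatory lemmas and propositions, the only genuine step here is the trichotomy-to-dichotomy reduction, and there is no substantial obstacle beyond bookkeeping.
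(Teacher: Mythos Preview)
Your proof is correct and follows exactly the paper's approach, which simply cites Propositions \ref{dichoord1}, \ref{dichoord2}, \ref{dichoval1} and \ref{dichoval2}; you have merely made explicit the elimination of the case $-1 \sim 0$ via Lemma \ref{lem1}(1), which the paper leaves implicit.
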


\section{Special and Manis Quasi-Orderings} \label{sectionspecialmanis}

\noindent
We continue our investigation by introducing special and Manis quasi-orderings (Definition \ref{specialqo}, respectively Definition \ref{manisqo}). These quasi-orderings subsume certain kinds of orderings and valuations. Special quasi-orderings yield a uniform approach to special valuations on the one hand, and orderings that are maximal elements in the real spectrum of the given ring on the other hand. Manis quasi-orderings provide a unification of Manis valuations and orderings with maximal support, if the ring is commutative. In general, they subsume Manis valuations with support $\{0\}$ and skew-field orderings (Theorem \ref{dichomanis1}). \vspace{1mm}

\noindent
Manis valuations were introduced in \cite{Manis} by Manis himself on commutative rings. These are valuations such that $v(R\backslash \mathfrak{q}_{\preceq})$ is a group, like for example any valuation on a field. Amongst other things, Manis used them to establish an approximation theorem for valuations on rings. Manis valuations play a prominent role in the theory of valued rings (cf. e.g. \cite{Kneb} and \cite{Kneb2}).

\begin{definition} (cf. [\cite{Manis}]) \label{Manis} Let $R$ be a ring. A valuation $v$ on $R$ is called a \textit{Manis valuation}, if its value monoid $\Gamma_v$ is a group.
\end{definition}

\noindent
Special valuations implicitly first appeared in the work of Huber and Knebusch on valuation spectra. These are precisely the valuations that admit no proper primary specialisation in the valuation spectrum of a commutative ring (cf. \cite{Huber}). They were formally introduced later on by Knebusch and Zhang in \cite{Kneb}.

\begin{definition} \label{defspecialval} (\cite[Chapter I]{Kneb}) Let $R$ be a commutative ring and $v$ a valuation on $R$ with value group $\Gamma$ (see Remark \ref{remonval}). 
\begin{enumerate}
\item The \textit{characteristic subgroup} $c_v(\Gamma)$ of $\Gamma$ is the smallest convex subgroup of $\Gamma$ containing all elements $v(x)$ such that $x \in R$ and $v(x) \leq 0,$ i.e.
\[
\quad \quad \quad c_v(\Gamma) = \{\gamma \in \Gamma\colon v(x) \leq \gamma \leq -v(x) \textrm{ for some } x \in R \textrm{ with } v(x) \leq 0\}.
\]
\item $v$ is called \textit{special}, if $c_v(\Gamma) = \Gamma.$
\end{enumerate}
\end{definition}

\noindent
In order to deduce a notion of special valuations on possibly non-commutative rings we make use of a different characterisation.

\begin{lemma} \label{special} $\mathrm{(cf.} \, $\cite[Proposition I.2.2]{Kneb}$\mathrm{)}$ Let $R$ be a commutative ring and let $v$ be a valuation on $R.$ Then 
\begin{enumerate}
\item $\mathfrak{q}_v \subseteq \{r \in R\colon v(xr) > 0 \textrm{ for all } x \in R\}$ \vspace{1mm}

\item The following are equivalent: \vspace{1mm}

\begin{itemize}
\item[(i)] $v$ is special, \vspace{1mm}

\item[(ii)] $\mathfrak{q}_v = \{r \in R\colon v(xr) > 0 \textrm{ for all } x \in R\}.$
\end{itemize}
\end{enumerate}
\end{lemma}
\begin{proof} The proof of (1) is trivial. For (2) first suppose that $v$ is special, and let $r \in R$ such that $v(xr) = v(x) + v(r) > 0$ for all $x \in R.$ This implies $v(r) > -v(x)$ for all $x \in R.$ Thus, $v(r) \notin c_v(\Gamma).$ Since $v$ is special, it follows $r \in \mathfrak{q}_v.$ \vspace{1mm}

\noindent
For the converse, suppose that (ii) holds and assume that $v$ is not special. Then we find some $\gamma \in \Gamma \backslash c_v(\Gamma),$ w.l.o.g. $\gamma \in v^{-1}(R\backslash q_v).$ Let $r \in R$ such that $v(r) = \gamma.$ Since $\gamma \notin c_v(\Gamma),$ we know that $v(r) > 0.$ We claim that $v(xr) > 0$ for all $x \in R.$ By (ii), this yields $r \in \mathfrak{q}_v,$ the desired contradiction. So let $x \in R,$ w.l.o.g. $v(x) < 0 < v(r).$ From $v(r) \notin c_v(\Gamma),$ it follows $-v(x) < v(r).$ Hence, $0 < v(xr).$
\end{proof}

\noindent
We further show that we do not have to distinguish between left and right special valuations.
 
\begin{lemma} \label{lemmaspecial}
Let $R$ be a ring, $v$ a valuation on $R$ and $x,y \in R.$ Then $v(xy) > 0$ if and only if $v(yx) > 0.$
\end{lemma}
\begin{proof} We prove the lemma by contraposition. So let $x,y \in R,$ w.l.o.g. $x,y \notin \mathfrak{q}_v.$ By cancellation in $\Gamma_v,$ we obtain
\[
v(x)+v(y) \leq 0 \Leftrightarrow v(y)+v(x)+v(y) \leq v(y) \Leftrightarrow v(y)+v(x) \leq 0. 
\]
\end{proof}

\begin{definition} \label{specialnc} Let $R$ be a ring and let $v$ be a valuation on $R.$ We call $v$ \textit{special}, if $\mathfrak{q}_v = \{r \in R\colon v(xr) > 0 \textrm{ for all } x \in R\}.$
\end{definition}

\begin{remark} \label{remonval2} Let $R$ be a ring.
\begin{enumerate}
\item By definition and Lemma \ref{lemmaspecial}, a valuation $v$ on $R$ is \vspace{1mm}

\begin{enumerate}
\item special, if and only if for any $x \in R\backslash\mathfrak{q}_v$ we find some $y \in R$ such that $v(xy) \leq 0,$ or equivalently, $v(yx) \leq 0.$ \vspace{1mm}

\item Manis, if and only if for any $x \in R\backslash\mathfrak{q}_v$ we find some $y \in R$ such that $v(xy) = 0,$ or equivalently, $v(yx) = 0.$
\end{enumerate} \vspace{1mm}

\item Evidently, any Manis valuation is a special valuation.
\end{enumerate}
\end{remark}

\noindent
We first deduce a notion of special quasi-orderings. By Remark \ref{remonval2}(1), a quasi-ordering shall be called special if and only if for any $x \in R\backslash \mathfrak{q}_v$ there is some $y \in R$ such that $1 \preceq xy,$ respectively $1 \preceq yx.$ As in the case of special valuations, left and right special quasi-orderings coincide (see Proposition \ref{leftrightspecial}).

\begin{lemma} \label{QR2-} Let $(R,\preceq)$ be a quasi-ordered ring and let $x,y,z \in R.$ If $x \preceq y$ and $z \preceq 0,$ then $yz \preceq xz$ and $zy \preceq zx.$
\begin{proof} For symmetry reasons, it suffices to show that $yz \preceq xz.$ We may assume that $z \notin \mathfrak{q}_{\preceq}.$ Moreover, if $y \in \mathfrak{q}_{\preceq},$ then $x,z \preceq 0,$ whence $0 \preceq -x,-z$ by Lemma \ref{-+}. Thus, by (QR2), $yz \sim 0 \preceq xz.$ So we may further assume that $y \notin \mathfrak{q}_{\preceq}.$ Now $x \preceq y$ and $0 \preceq -z$ implies $-xz \preceq -yz.$ We claim that $yz \nsim -yz.$ Once this is shown, it follows from $-xz \preceq -yz$ that $yz-xz \preceq 0.$ The latter implies $yz \preceq xz.$ Indeed, either $x \nsim 0$ and therefore $xz \nsim 0$ (Proposition \ref{prime}), so that we can apply (QR4); or $x \sim 0,$ and therefore $yz - xz \sim yz \preceq 0 \sim xz$ (Lemma \ref{lem1}). \vspace{1mm}

\noindent 
Assume for the sake of a contradiction that $yz \sim -yz.$ This yields $0 \preceq yz, -yz$ (Lemma \ref{-+}). As $y \notin \mathfrak{q}_{\preceq},$ either $0 \prec y$ or $0 \prec -y.$ So via (QR3) we deduce from either $0 \preceq yz$ (if $0 \prec y$) or $0 \preceq -yz$ (if $0 \prec -y$) that $0 \preceq z.$ Hence $z \sim 0,$ the desired contradiction. 
\end{proof}
\end{lemma}

\begin{lemma} \label{QR3-} Let $(R,\preceq)$ be a quasi-ordered ring and let $x,y,z \in R.$ If $xz \preceq yz$ (respectively $zx \preceq zy$) and $z \prec 0,$ then $y \preceq x.$
\begin{proof} Suppose that $xz \preceq yz$ and $z \prec 0.$ Assume that $x \prec y.$ Then Lemma \ref{QR2-} yields $yz \preceq xz,$ whence $xz \sim yz.$ Since $0 \prec -z$ (Lemma \ref{-+}), we obtain via (QR3) that $-x \sim -y.$ By (QR2) (if $0 \prec -1$), respectively Lemma \ref{QR2-} (if $-1 \prec 0$), it follows that $x \sim y,$ a contradiction. Thus, $y \preceq x.$
\end{proof}
\end{lemma}

\begin{proposition} $\mathrm{(see \; Lemma \; \ref{lemmaspecial})}$ \label{leftrightspecial} Let $(R,\preceq)$ be a quasi-ordered ring and let $x,y \in R.$ Then $1 \preceq xy$ if and only if $1 \preceq yx.$
\begin{proof} Let $1 \preceq xy.$ If $x \prec 0,$ then Lemma \ref{QR2-} implies $xyx \preceq x.$ Consequently, Lemma \ref{QR3-} yields $1 \preceq yx.$ If $0 \prec x,$ then we just apply axiom (QR2) instead of Lemma \ref{QR2-}, and axiom (QR3) instead of Lemma \ref{QR3-}.
\end{proof}
\end{proposition}

\begin{definition} \label{specialqo} Let $R$ be a ring. We call a quasi-ordering $\preceq$ on $R$ \textit{special}, if for any $x \in R\backslash\mathfrak{q}_{\preceq}$ there is some $y \in R$ such that $1 \preceq xy$ (or equivalently, $1 \preceq yx$).
\end{definition}

\noindent
Definition \ref{specialqo} automatically yields a notion of special orderings. Remarkably, it coincides with Krivine's notion of \textit{pseudo-fields} (\cite{Krivine}, see also \cite[Definition 13.2.6]{Dick}). Krivine showed that a preordering $T$ on a real ring $R$ is a maximal preordering if and only if $R/T$ is a pseudo-field (\cite[Theorem 6]{Krivine}). Special orderings are precisely the maximal elements in the real spectrum of a given commutative ring (cf. \cite[Theorem 13.2.8]{Dick}).

\begin{example} \label{expspecial} \hspace{7cm}

\begin{enumerate}
\item (cf. \cite[Ex. 17.3]{Lam4}) Consider the Weyl algebra $$R := \mathbb{R}\langle X,Y\rangle/(XY-YX-1).$$ We write $x$ and $y$ for the residue classes of $X$ and $Y,$ respectively. Since $yx = xy-1,$ any $r \in R$ can be written in the canonical form
\[
r = r_0(x)+{r_1(x)y} + \ldots + r_n(x)y^n
\]
for some $n \in \mathbb{N}_0,$ where $r_i \in \mathbb{R}[X]$ for all $0 \leq i \leq n.$ The chain
\[
\quad \quad \quad \mathbb{R} < x < x^2 < \ldots < y < xy < x^2y < \ldots < y^2 < xy^2 < x^2y^2 < \ldots
\]
defines a special ordering on $R.$ \vspace{1mm}

\item The unique ordering on $R= \mathbb{R}[X]$ such that $0 < X < \mathbb{R}^{>0}$ is not special, since $|Xr| < 1$ for any $r \in R.$ 
\end{enumerate}
\end{example}

\noindent
Likewise, we may introduce Manis quasi-orderings. By Remark \ref{remonval2}(1), a quasi-ordering shall be called Manis if and only if for any $x \in R\backslash \mathfrak{q}_v$ there is some $y \in R$ such that $1 \sim xy,$ respectively $1 \sim yx.$ As for special quasi-orderings, the notions of left and right Manis quasi-orderings coincide (see Corollary \ref{Manisdef}).

\begin{lemma} \label{lemmacorpres} Let $(R,\preceq)$ be a quasi-ordered ring and let $x,y \in R.$ If $x \sim y,$ then $x \sim -y$ or $0 \sim x-y.$
\begin{proof} If $x,y \sim 0,$ then $x \sim -y,$ as $\mathfrak{q}_{\preceq}$ is an ideal. So suppose that $x,y \nsim 0,$ and assume that $x \nsim -y.$ We show that $0 \sim x-y.$ Note that $y \preceq x \nsim -y.$ Thus, $0 \preceq x-y$ by (QR4). Likewise, $x \preceq y \sim x \nsim -y,$ so $y \nsim -y,$ and therewith $x-y \preceq 0.$ Hence, $0 \sim x-y.$ 
\end{proof}
\end{lemma}

\begin{corollary} \label{corpres} Let $(R,\preceq)$ be a quasi-ordered ring. Then $\sim$ is preserved under multiplication, i.e. if $x \sim y,$ then also $axb \sim ayb$ for all $x,y,a,b \in R.$ 
\begin{proof} We show that $x \sim y$ implies $ax \sim ay,$ the proof of right multiplication being analogue. The cases $0 \preceq a$ (axiom (QR2)) and $x,y$ in $\mathfrak{q}_{\preceq}$ ($\mathfrak{q}_{\preceq}$ is an ideal) are both trivial. So suppose that $0 \npreceq a$ and $x,y \nsim 0.$ Then $0 \preceq -a.$ Lemma \ref{lemmacorpres} gives rise to a proof by cases. \vspace{1mm}

\noindent
First suppose $0 \sim x-y.$ Since $-x\nsim 0,$ we have $-x \nsim x-y.$ Hence, $0 \sim x-y$ yields $-x \sim -y$ by (QR4). Because of $0 \preceq -a,$ axiom (QR2) implies $ax \sim ay.$ \vspace{1mm}

\noindent
Now suppose $0 \nsim x-y.$ Then also $0 \nsim y-x.$ Thus, Lemma \ref{lemmacorpres} implies $x \sim -y$ and $y \sim -x.$ Therefore, $-x \sim y \sim x \sim -y.$ So we have $-x \sim -y$ and $0 \preceq -a,$ whence the claim follows by applying (QR2).
\end{proof}
\end{corollary}

\begin{corollary} \label{QR3+} Let $(R,\preceq)$ be a quasi-ordered ring. Then $\sim$ is preserved under cancellation, i.e. if $axb \sim ayb$ and $0 \not\sim a,b,$ then also $x \sim y$ for all $x,y,a,b \in R.$
\begin{proof} Due to symmetry, we may w.l.o.g. assume that $b = 1.$ If $0 \prec a,$ the claim follows immediately by (QR3). If $a \prec 0,$ then $0 \prec -a$ (Lemma \ref{-+}), so $ax \sim ay$ implies $-x \sim -y.$ By Corollary \ref{corpres} we get $x \sim y.$
\end{proof}
\end{corollary}

\begin{corollary} \label{Manisdef} Let $(R,\preceq)$ be a quasi-ordered ring and let $x,y \in R.$ Then $xy \sim 1$ if and only if $yx \sim 1.$ 
\begin{proof} Due to symmetry, it suffices to show that $xy \sim 1$ implies $yx \sim 1.$ So let $xy \sim 1.$ Then Corollary \ref{corpres} yields $yxy \sim y.$ Since $xy \sim 1,$ we know that $y \nsim 0.$ Therefore, $yxy \sim y$ implies $yx \sim 1$ (Corollary \ref{QR3+}).
\end{proof}
\end{corollary}

\begin{definition} \label{manisqo} Let $R$ be a ring. A quasi-ordering $\preceq$ on $R$ is called \textit{Manis}, if for any $x \in R\backslash\mathfrak{q}_{\preceq}$ there is some $y \in R$ such that $1 \sim xy$ (or equivalently, $1 \sim yx$).
\end{definition}

\noindent
Obviously, any Manis quasi-ordering is special. Manis quasi-orderings give rise to the following dichotomy:

\begin{theorem} \label{dichomanis1} Let $(R,\preceq)$ be a quasi-ordered ring. Then $\preceq$ is Manis if and only if either $(R/\mathfrak{q}_{\preceq},\preceq)$ is an ordered division ring or there is a unique Manis valuation $v$ on $R$ (up to equivalence) such that $x \preceq y \Leftrightarrow v(y) \leq v(x)$ for all $x,y \in R.$
\begin{proof} By Theorem \ref{dicho}, $\preceq$ is either an ordering or induced by a unique valuation $v$ on $R.$ In the latter case, $v$ is obviously a Manis valuation by Remark \ref{remonval2}(1) and Definition \ref{manisqo}. So suppose from now on that $\preceq$ is an ordering on $R.$ Hence, for all $x,y \in R$ we have $x \sim y$ if and only if $x = y+c$ for some $c \in \mathfrak{q}_{\preceq}.$ \vspace{1mm}

\noindent
First suppose that $\preceq$ satisfies the Manis property, and let $\overline{0} \neq \overline{x} \in R/\mathfrak{q}_{\preceq}.$ Then we find some $y \in R$ such that $xy \sim 1,$ i.e. $xy = 1+c$ for some $c \in \mathfrak{q}_{\preceq}.$ Hence, $\overline{xy} = \overline{1}.$ Analogously, we obtain $\overline{yx} = \overline{1}.$ Therefore, $(R/\mathfrak{q}_{\preceq},\preceq)$ is an ordered division ring. \vspace{1mm}

\noindent
Conversely, let $R/\mathfrak{q}_{\preceq}$ be a division ring and let $\overline{0} \neq \overline{x} \in R/\mathfrak{q}_{\preceq}.$ Then we find some $y \in R/\mathfrak{q}_{\preceq}$ such that $\overline{xy} = \overline{1},$ whence $xy \sim 1$ by Lemma \ref{lem1}. Thus, $\preceq$ has the Manis property.
\end{proof}
\end{theorem}

\noindent
Consequently, if $R$ is a commutative ring, a Manis quasi-ordering on $R$ is either a Manis valuation or an ordering whose support is a maximal ideal of $R.$

\begin{example} \label{expmanis} \hspace{7cm}
\begin{enumerate}
\item Consider the commutative ring $R = C([0,1],\mathbb{R})$ of all continuous maps $f\colon [0,1] \to \mathbb{R}$ with pointwise addition and multiplication, and let $x \in [0,1].$ Then $f \leq g :\Leftrightarrow f(x) \leq g(x)$ defines a Manis ordering on $R.$ Indeed, if $f \in R$ such that $f(x) = r \neq 0,$ then the constant map $g(x) = 1/r$ is also in $R$ and $fg \sim 1.$ \vspace{1mm}

\item The ordering on the Weyl algebra $R$ from Example \ref{expspecial}(1) is not Manis, since for any $r \in R\backslash\{0\}$ either $ry < 0$ or $ry > 1.$
\end{enumerate}
\end{example}

\section{A Coarsening Relation on the Quasi-Orderings of a Ring} \label{sectioncoarser}

\noindent
Quasi-orderings serve the purpose of treating orderings and valuations on rings simultaneously and uniformly. This is demonstrated in the present section, where we uniformly define a coarsening relation on the set of all quasi-orderings on a ring (Definition \ref{defcoarse}), that way subsuming three different notions at once: inclusions of orderings, coarsenings of valuations, and compatibility of orderings and valuations. This coarsening relation coincides with the one that Efrat introduced in \cite[Ch. 7]{Efrat} for localities on fields and plays a key role in all of the following sections. Here, we prove that it is a partial ordering (Proposition \ref{tree1}). \vspace{1mm} 

\begin{notation} If $R$ is a ring, we denote by $\mathcal{Q}(R)$ the set of all its quasi-orderings.
\end{notation}

\noindent
The perhaps most natural approach for a coarsening relation $\leq$ on $\mathcal{Q}(R)$ would be to declare that $\preceq_2$ is coarser than $\preceq_1$ if and only if $x \preceq_1 y$ implies $x \preceq_2 y,$ i.e. if and only if $\preceq_1 \; \subseteq \; \preceq_2.$ Note, however, that in this case $\preceq_1$ is an ordering if and only if $\preceq_2$ is an ordering, since the sign of $-1$ would be the same for both quasi-orderings. A slight modification yields:

\begin{definition} \label{defcoarse} Let $R$ be a ring, and let $\preceq_1$ and $\preceq_2$ be quasi-orderings on $R.$ We say that $\preceq_2$ is \textit{coarser} than $\preceq_1$ (or $\preceq_1$ \textit{finer} than $\preceq_2$), written $\preceq_1 \leq \preceq_2,$ if 
\[
\forall x,y \in R\colon 0 \preceq_1 x \preceq_1 y \Rightarrow x \preceq_2 y.
\]
Two quasi-orderings $\preceq_1, \preceq_2$ are said to be \textit{comparable}, if $\preceq_1 \leq \preceq_2$ or $\preceq_2 \leq \preceq_1,$ and otherwise \textit{incomparable}.
\end{definition}

\noindent
Let us consider the four possible cases of the previous definition.
\begin{enumerate}
\item If both quasi-orderings are orderings, we obtain that $\preceq_1 \subseteq \preceq_2,$ i.e. that the finer ordering $\preceq_1$ is contained in the coarser ordering $\preceq_2.$ \vspace{1mm}

\item If both quasi-orders are induced by valuations, say $v_1$ and $v_2,$ we obtain that $v_1(y) \leq v_1(x)$ implies $v_2(y) \leq v_2(x).$ This coincides with the usual coarsening relation on ring valuations. \vspace{1mm}

\item The case where $\preceq_1$ is a valuation and $\preceq_2$ is an ordering cannot occur; $0 \preceq_1 0 \preceq_1 -1$ would imply $0 \preceq_2 -1,$ a contradiction. Hence, an ordering cannot be coarser than a valuation. \vspace{1mm}

\item If $\preceq_1$ is an ordering and $\preceq_2$ a valuation, we obtain that they are compatible with each other in the usual sense.
\end{enumerate}

\noindent
In particular, Definition \ref{defcoarse} yields the same cases as Efrat's coarsening relation (cf. \cite[Ch. 7]{Efrat}). We conclude this section by proving that $\leq$ defines a partial ordering on $\mathcal{Q}(R).$

\begin{lemma} \label{signs1} Let $R$ be a ring and $\preceq_1,\preceq_2 \; \in \mathcal{Q}(R).$ Then the following are equivalent:
\begin{enumerate}
\item $\preceq_1 \, \leq \, \preceq_2,$ \vspace{1mm}

\item $\forall x,y \in R\colon 0 \preceq_1 x \preceq_1 y \Rightarrow 0 \preceq_2 x \preceq_2 y.$
\end{enumerate}
\end{lemma}
\begin{proof} Obviously, (2) implies (1). For the converse let $\preceq_1 \, \leq \, \preceq_2$ and $0 \preceq_1 x \preceq_1 y.$ Then $x \preceq_2 y.$ Moreover, $0 \preceq_1 0 \preceq_1 x$ yields $0 \preceq_2 x.$ Thus, $0 \preceq_2 x \preceq_2 y.$
\end{proof}

\begin{lemma} \label{minusey} Let $(R,\preceq)$ be a quasi-ordered ring and $x,y \in R.$ If $x \preceq y \prec 0,$ then $0 \prec -y \preceq -x.$
\end{lemma}
\begin{proof} Clearly $0 \prec -x,-y$ by Lemma \ref{-+}. It remains to show that $-y \preceq -x.$ Assume that $-x \prec -y.$ We know that $y \prec 0 \prec -x,-y,$ and therefore $-x \not\sim y$ and $y \not\sim -y.$ By (QR4) it follows from $x \preceq y$ that $0 \preceq y-x,$ and from $-x \preceq -y$ that $y-x \preceq 0.$ Thus, $y-x \in \mathfrak{q}_{\preceq}.$ This implies $-y \sim -x$ (see Lemma \ref{lem1}), a contradiction. Hence, $-y \preceq -x.$
\end{proof}

\begin{proposition} \label{tree1} Let $R$ be a ring. Then $(\mathcal{Q}(R),\leq)$ is a partially ordered set.
\begin{proof} Obviously, $\leq$ is reflexive. Next, suppose that $\preceq_1 \, \leq \, \preceq_2 \, \leq \, \preceq_3,$ and let $x,y \in R$ such that $0 \preceq_1 x \preceq_1 y.$ From $\preceq_1 \, \leq \, \preceq_2$ and Lemma \ref{signs1} it follows $0 \preceq_2 x \preceq_2 y.$ Thus, $\preceq_2 \, \leq \, \preceq_3$ yields $x \preceq_3 y.$ This shows $\preceq_1 \, \leq \, \preceq_3,$ whence $\leq$ is transitive. \vspace{1mm}

\noindent
Last but not least, we prove that $\leq$ is anti-symmetric. So suppose that $\preceq_1 \, \leq \, \preceq_2$ and $\preceq_2 \, \leq \, \preceq_1.$ Then Lemma \ref{signs1} implies that $0 \preceq_1 x \preceq_1 y \Leftrightarrow 0 \preceq_2 x \preceq_2 y$ for all $x,y \in R,$ in particular $0 \preceq_1 x \Leftrightarrow 0 \preceq_2 x.$ So if $\preceq_1 \, \neq \, \preceq_2,$ there exist some $x,y \in R$ such that (w.l.o.g.) $x \preceq_1 y \prec_1 0,$ but $y \prec_2 x \prec_2 0.$ Lemma \ref{minusey} tells us that $0 \prec_1 -y \preceq_1 -x,$ and $0 \prec_2 -x \preceq_2 -y.$ Now if $-x \sim_2 -y,$ then \cite[Corollary 3.10]{Kuhlmul} implies $x \sim_2 y,$ a contradiction. Hence, $0 \prec_1 -y \preceq_1 -x$ but $-x \prec_2 -y,$ a contradiction to the assumption $\preceq_1 \, \leq \, \preceq_2.$ Therefore, $\preceq_1 \, = \, \preceq_2.$
\end{proof}
\end{proposition}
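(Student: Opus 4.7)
My plan is to verify the three defining properties of a partial order in turn, with the real work falling on anti-symmetry.

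Reflexivity is immediate: for any $\preceq \, \in \mathcal{R}$, the implication $0 \preceq x \preceq y \Rightarrow x \preceq y$ is trivial.

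For transitivity, given $\preceq_1 \leq \preceq_2 \leq \preceq_3$ and $0 \preceq_1 x \preceq_1 y$, I would invoke Lemma \ref{signs1} to upgrade the conclusion of $\preceq_1 \leq \preceq_2$ from the bare statement $x \preceq_2 y$ to the full chain $0 \preceq_2 x \preceq_2 y$, and then apply $\preceq_2 \leq \preceq_3$ to obtain $x \preceq_3 y$.

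For anti-symmetry, suppose $\preceq_1 \leq \preceq_2$ and $\preceq_2 \leq \preceq_1$. Applying Lemma \ref{signs1} in both directions, the ``positive halves'' of the two relations coincide: the positive cones $\{x \in R : 0 \preceq_i x\}$ agree, and so do the restrictions of $\preceq_1$ and $\preceq_2$ to pairs of such positive elements. In particular, the sign of $-1$ is the same under both quasi-orderings, so by Theorem \ref{dicho} either both are valuations or both are orderings. In the valuation case, Lemma \ref{positive} renders the positivity clause vacuous, so agreement on positive pairs already yields $\preceq_1 = \preceq_2$. In the ordering case, I would argue by contradiction: if some $x,y$ witness $x \preceq_1 y$ but $y \prec_2 x$, then since positive pairs already agree, at least one of $x,y$ must be negative. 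Applying Lemma \ref{signs2} to negate both entries then converts this into a discrepancy between $-y$ and $-x$ on the positive side, contradicting the established agreement.

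The main obstacle is this last step: reducing a hypothetical negative-side disagreement back to one on the positive side via Lemma \ref{signs2}. Everything else amounts to bookkeeping once Lemma \ref{signs1} is in hand.
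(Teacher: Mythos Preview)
Your plan is essentially the paper's own argument: reflexivity is trivial, transitivity is Lemma \ref{signs1}, and anti-symmetry is obtained by flipping a hypothetical negative-side discrepancy to the positive side via Lemma \ref{signs2}. The paper does \emph{not} invoke Theorem \ref{dicho} to split into valuation versus ordering cases; it argues uniformly by observing that once the positive halves agree, any witness to $\preceq_1 \neq \preceq_2$ must already satisfy $x \preceq_1 y \prec_1 0$ and $y \prec_2 x \prec_2 0$, and then Lemma \ref{signs2} finishes. Your case split is harmless (and makes the valuation case a one-liner via Lemma \ref{positive}), but it is not needed.

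One small wrinkle to tighten: in your ordering case you deduce that ``at least one of $x,y$ must be negative'' and then apply Lemma \ref{signs2}, but that lemma requires \emph{both} entries to be strictly negative. You should dispose of the mixed-sign possibilities first; they are ruled out immediately by transitivity and the agreement of positive cones (e.g.\ if $0 \preceq_1 x$ and $y \prec_1 0$ then $x \preceq_1 y$ forces $x \prec_1 0$, a contradiction; and if $x \prec_2 0 \preceq_2 y$ then $y \prec_2 x$ is impossible). Once that is said, both $x$ and $y$ are negative under both quasi-orderings and Lemma \ref{signs2} applies exactly as in the paper.
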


\section{A Tree Structure Theorem for Quasi-Ordered Rings} \label{sectiontree}

\noindent
In the present section we establish the main theorem of this paper, stating that for any ring $R$ and any two-sided completely prime ideal $\mathfrak{q}$ of $R,$ the set $(\mathcal{Q}_{\mathfrak{q}}(R),\leq)$ of all quasi-orderings on $R$ with support $\mathfrak{q},$ equipped with the coarsening relation $\leq,$ is a rooted tree (Theorem \ref{treering}). From this we deduce a partial ordering $\leq',$ which canonically partitions $\mathcal{Q}(R)$ into the trees $\mathcal{Q}_{\mathfrak{q}}(R)$ (Corollary \ref{forest}). Next, we show that a quasi-ordering $\preceq$ on $R$ is comparable with a quasi-ordering of support $\mathfrak{q}$ if and only if $\mathfrak{q}$ is $\preceq$-convex (Proposition \ref{qcomp}). This gives rise to the result that the set of all valuations with support $\mathfrak{q}$ and all orderings for which $\mathfrak{q}$ is convex, partially ordered by $\leq,$ is a rooted tree (Proposition \ref{convtree}). \vspace{1mm}

\noindent
In this section, let $R$ always be a ring and $\mathfrak{q}$ a two-sided completely prime ideal of $R.$ Moreover, we denote by $\mathcal{Q}_{\mathfrak{q}}(R)$ the set of all quasi-orderings on $R$ with support $\mathfrak{q.}$ Finally, $\leq$ always denotes the coarsening relation introduced in Definition \ref{defcoarse}.

\begin{definition} \label{tree} \hspace{7cm}
\begin{enumerate}
\item A partially ordered set $(S,\leq)$ is called a \textit{tree} if for all $s,s_1,s_2 \in S$ with $s \leq s_1,s_2,$ the elements $s_1$ and $s_2$ are comparable, i.e. $s_1 \leq s_2$ or $s_2 \leq s_1.$ \vspace{1mm}

\item A \textit{rooted tree} is a tree that admits a maximum. \vspace{1mm}

\item A \textit{branch} $S_i$ of a tree $S$ is a maximal chain in $S.$ \vspace{1mm}
 \end{enumerate} 
\end{definition}

\begin{remark} \label{reverse} Different to the usual definition of a (rooted) tree, we reversed the ordering in Definition \ref{tree}. This is because we want coarse valuations to be large elements with respect to our coarsening relation. In particular, trivial valuations shall be maximal elements. \vspace{1mm}
\end{remark}

\noindent
Unfortunately, as we will see below (Lemma \ref{counterexp}), the whole set $(\mathcal{Q}(R),\leq)$ is in general neither a tree, nor admits a maximum. We first show that $(\mathcal{Q}_{\mathfrak{q}}(R),\leq)$ is always a rooted tree (see Theorem \ref{treering}).

\begin{lemma} \label{iff} Let $\preceq_1,\preceq_2 \, \in \mathcal{Q}_{\mathfrak{q}}(R)$ such that $\preceq_1 \, \leq \, \preceq_2$ and $-1 \preceq_2 0.$ Then $0 \preceq_1 x$ if and only if $0 \preceq_2 x.$
\end{lemma}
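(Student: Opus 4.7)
The plan is to split the biconditional into its two implications. The forward direction is direct from the definition of $\preceq \leq \preceq_1$; the converse is a contradiction argument built on Lemmas \ref{positive} and \ref{sign} together with the same-support hypothesis.

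For ($\Rightarrow$), assume $0 \preceq x$. By reflexivity, $0 \preceq 0$, so we have the chain $0 \preceq 0 \preceq x$. Applying Definition \ref{defcoarse} to this chain (with the definition's ``$x$'' chosen as $0$ and its ``$y$'' as our $x$) immediately delivers $0 \preceq_1 x$.

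For ($\Leftarrow$), assume $0 \preceq_1 x$ and suppose, for contradiction, that $x \prec 0$. Since $\mathfrak{q}_{\preceq} = \mathfrak{q}_{\preceq_1} = \mathfrak{q}$, the strict inequality $x \prec 0$ gives $x \notin \mathfrak{q}$, in particular $x \not\sim_1 0$. Lemma \ref{positive} then forces $\preceq$ to be of ordering type (otherwise $0 \preceq y$ for every $y$, contradicting $x \prec 0$), so by Theorem \ref{dicho} we have $-1 \prec 0$. Lemma \ref{sign} yields $0 \prec -x$, and applying the forward direction to $-x$ gives $0 \preceq_1 -x$. It remains to combine $0 \preceq_1 x$ with $0 \preceq_1 -x$ to force $x \sim_1 0$, contradicting $x \notin \mathfrak{q}$; when $\preceq_1$ is itself an ordering, Lemma \ref{sign} transforms $0 \preceq_1 -x$ into $x \preceq_1 0$, and together with $0 \preceq_1 x$ the contradiction is immediate.

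The delicate point I expect to wrestle with is the subcase in which $\preceq_1$ is a valuation, since then $0 \preceq_1 y$ holds automatically for every $y$ and Lemma \ref{sign} is unavailable for $\preceq_1$. In that subcase the hypothesis $0 \preceq_1 x$ carries no information on its own, so the contradiction must be re-derived by inserting the nontrivial $\preceq$-chain coming from $0 \prec -x$ (and, if necessary, further witnesses such as $-1$ or products thereof) into the relation $\preceq \leq \preceq_1$, in order to force the $\preceq_1$-value of $x$ to be $\infty$ and hence $x \in \mathfrak{q}$.
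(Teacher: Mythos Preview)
Your forward direction and the subcase where $\preceq_1$ is an ordering are fine and essentially coincide with the paper's argument. The gap you flag in the last paragraph, however, is not a ``delicate point'' to be wrestled with: it is fatal, because the lemma as stated is \emph{false} in precisely that subcase. Take $R=\mathbb{R}[X]$, $\mathfrak{q}=\{0\}$, let $\preceq$ be the non-Archimedean ordering $P_{\mathrm{na}}$ and $\preceq_1$ the degree valuation $v_{\mathrm{deg}}$ from Example~\ref{treeexp1}(2). Both lie in $\mathcal{R}_{\{0\}}$ and the paper's own diagram records $P_{\mathrm{na}}\le v_{\mathrm{deg}}$. Yet for $x=-1$ one has $0\preceq_1 -1$ (everything is $\succeq_1 0$ for a valuation) while $-1\prec 0$. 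So no amount of ``inserting chains'' will force $v_{\mathrm{deg}}(-1)=\infty$; your proposed route simply cannot close.

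It is worth noting that the paper's own proof has the very same hole: its case split ``$0\prec -1$'' (i.e.\ $\preceq$ is a valuation) versus ``$-1\prec_1 0$'' (i.e.\ $\preceq_1$ is an ordering) is not exhaustive, and the missing case is exactly $\preceq$ an ordering with $\preceq_1$ a valuation. What \emph{does} survive is the forward implication and the backward implication under the extra hypothesis that $\preceq_1$ is an ordering (equivalently, that $\preceq$ is a valuation). For the application in Proposition~\ref{tree3} this is enough once one observes that, when $\preceq_i$ is a valuation, $a\sim_i -a$, so the witnesses $a_i,b_i$ may be replaced by their $\preceq$-absolute values before invoking the lemma. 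You should therefore either restrict the hypothesis of the lemma accordingly, or bypass it in Proposition~\ref{tree3} with that sign-adjustment; attempting to prove the stated biconditional in full generality is hopeless.
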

\begin{proof} The only-if-implication is trivial. For the if-implication suppose that $0 \preceq_2 x,$ and assume for the sake of a contradiction that $x \prec_1 0.$ Then $0 \prec_1 -x,$ so from the assumption $\preceq_1 \, \leq \, \preceq_2$ and the fact that the supports of these quasi-orderings coincide, we obtain $0 \prec_2 -x.$ Applying (QR3) to $0 \preceq_2 x$ with $-x$ yields $0 \preceq_2 -1,$ a contradiction to the assumption that $-1 \preceq_2 0.$ Thus, $0 \preceq_1 x.$
\end{proof}

\begin{lemma} \label{mult} Let $0 \prec x \preceq y$ and $0 \preceq a \prec b.$ Then $ax \prec by$ and $xa \prec yb.$
\end{lemma}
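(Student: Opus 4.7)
My plan is to derive $ax \prec by$ by inserting $bx$ as an intermediate step, establishing $ax \prec bx$ (strict) and $bx \preceq by$ (possibly non-strict), and then concatenating. The assumption $0 \preceq b$ lets this asymmetric slicing go through even though the sign of $a$ is not prescribed; we have $0 \prec y$ automatically, since $y \sim 0$ together with $0 \prec x \preceq y$ would force $x \sim 0$, contradicting $0 \prec x$.

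For the first link, I apply (QR2) with outer non-negative factors $1$ and $x$ to the hypothesis $a \preceq b$, obtaining $ax = 1 \cdot a \cdot x \preceq 1 \cdot b \cdot x = bx$. To sharpen this to a strict inequality, I argue by contradiction: if $ax \sim bx$, then (QR3), applied in both directions with the strictly positive outer factors $1$ and $x$ (using $0 \prec 1$ from (QR1) and $0 \prec x$ from the hypothesis), yields $a \sim b$, contradicting $a \prec b$. Hence $ax \prec bx$.

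For the second link, another application of (QR2) with outer non-negative factors $b$ and $1$ to the hypothesis $x \preceq y$ gives $bx = b \cdot x \cdot 1 \preceq b \cdot y \cdot 1 = by$. Concatenating produces $ax \prec bx \preceq by$. To upgrade the concatenation to $ax \prec by$, I note that an equivalence $ax \sim by$ would give $by \preceq ax \prec bx$, contradicting $bx \preceq by$. The analogous argument with the roles of the outer factors swapped (first multiplying $a \preceq b$ on the left by $x$ and on the right by $1$, then $x \preceq y$ on the left by $1$ and on the right by $b$, and using (QR3) on the strictly positive outer factors $x$ and $1$) yields $xa \prec xb \preceq yb$ and hence $xa \prec yb$.

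I expect the only subtle point to be the bookkeeping with signs: (QR2) demands only $\succeq 0$ for the outer factors, so the sign of $a$ plays no role, whereas (QR3) demands strict positivity, which is precisely why the outer factors in both applications need to be $1$ and $x$ rather than any factor involving $b$ (which might lie in $\mathfrak{q}_{\preceq}$). The non-commutativity simply forces the two separate (and entirely symmetric) verifications for $ax \prec by$ and $xa \prec yb$.
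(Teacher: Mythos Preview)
Your proof is correct and follows essentially the same route as the paper's: factor through $bx$, establish $ax \prec bx$ via (QR2) and (QR3), then $bx \preceq by$ via (QR2), and concatenate. The paper is terser (it obtains $bx \preceq by$ by the contraposition of (QR2) rather than a direct application, and it leaves the final strictness implicit), but the structure and the key ideas are identical.
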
 
\begin{proof} Using (QR2) and (QR3), we obtain from $a \prec b$ and $0 \prec x$ that $ax \prec bx.$ Moreover, from $x \preceq y$ and $0 \preceq b,$ it follows via (QR2) that $bx \preceq by.$ Therefore, $ax \prec by.$ The same arguing shows that also $xa \prec yb.$
\end{proof}

\begin{proposition} \label{tree3} Let $\preceq,\preceq_1,\preceq_2 \, \in \mathcal{Q}_{\mathfrak{q}}(R)$ such that $\preceq \, \leq \, \preceq_1,\preceq_2.$ Then $\preceq_1$ and $\preceq_2$ are comparable.
\end{proposition}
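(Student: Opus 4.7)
The plan is to argue by contradiction: suppose that neither $\preceq_1 \leq \preceq_2$ nor $\preceq_2 \leq \preceq_1$. Unfolding Definition \ref{defcoarse}, this gives witnesses $x_1,y_1,x_2,y_2 \in R$ with
\[
0 \preceq_1 x_1 \preceq_1 y_1 \text{ and } y_1 \prec_2 x_1, \qquad 0 \preceq_2 x_2 \preceq_2 y_2 \text{ and } y_2 \prec_1 x_2.
\]
From Lemma \ref{iff} applied to $\preceq \leq \preceq_1$ and to $\preceq \leq \preceq_2$, all four of these elements are non-negative with respect to all three quasi-orderings. Because $\preceq,\preceq_1,\preceq_2$ share the support $\mathfrak{q}$, the strict inequalities $y_1 \prec_2 x_1$ and $y_2 \prec_1 x_2$ force $x_1,x_2 \notin \mathfrak{q}$, for otherwise $x_1 \sim_2 0$ would collapse $y_1 \prec_2 x_1$ against $0 \preceq_2 y_1$ (and similarly for $x_2$). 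Hence $0 \prec_1 x_1$ and $0 \prec_2 x_2$.

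Now I would apply Lemma \ref{mult} twice, choosing the cross-product $x_1 y_2$ against $y_1 x_2$ (the correct pair is forced by non-commutativity). For $\preceq_1$, from $0 \prec_1 x_1 \preceq_1 y_1$ together with $0 \preceq_1 x_2$ and $y_2 \prec_1 x_2$ the lemma yields
\[
x_1 y_2 \prec_1 y_1 x_2.
\]
For $\preceq_2$, from $0 \prec_2 x_2 \preceq_2 y_2$ together with $0 \preceq_2 x_1$ and $y_1 \prec_2 x_1$ it yields
\[
y_1 x_2 \prec_2 x_1 y_2.
\]
Using (QR2) and Lemma \ref{iff}, both $x_1 y_2$ and $y_1 x_2$ are non-negative with respect to $\preceq$ itself.

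The last step invokes totality of $\preceq$. Either $x_1 y_2 \preceq y_1 x_2$, in which case $\preceq \leq \preceq_2$ gives $x_1 y_2 \preceq_2 y_1 x_2$, contradicting $y_1 x_2 \prec_2 x_1 y_2$; or $y_1 x_2 \preceq x_1 y_2$, in which case $\preceq \leq \preceq_1$ gives $y_1 x_2 \preceq_1 x_1 y_2$, contradicting $x_1 y_2 \prec_1 y_1 x_2$. Either way we reach a contradiction, proving comparability.

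The step I expect to be the main obstacle is the multiplicative one: one must select the cross-products carefully, verify the strict positivity hypothesis $0 \prec_i x_i$ (which is where the common support $\mathfrak{q}$ is used), and keep track of the order of the factors when applying Lemma \ref{mult}, so that the two invocations produce \emph{the same} pair of elements with strictly reversed inequalities under $\preceq_1$ and $\preceq_2$. Once the products are chosen correctly, totality of $\preceq$ together with Lemma \ref{iff} finishes the argument mechanically.
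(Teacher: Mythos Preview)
Your argument is correct and follows essentially the same route as the paper: set up the contradiction via witnesses, use Lemma~\ref{iff} and the common support $\mathfrak{q}$ to obtain strict positivity of $x_1,x_2$, apply the two variants of Lemma~\ref{mult} to produce the single pair $x_1y_2,\,y_1x_2$ with strictly reversed inequalities under $\preceq_1$ and $\preceq_2$, and then finish via totality of $\preceq$. The paper phrases the last step as a direct contraposition of $\preceq \leq \preceq_i$ rather than a case split on totality, but this is the same mechanism.
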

\begin{proof} First suppose that $0 \prec_i -1$ for $i= 1,2.$ By Lemma \ref{positive} and Corollary \ref{cor10}(2), we know that $0 \preceq_i x$ and $x \sim_{\preceq_i} -x$ for all $x \in R.$ Assume for the sake of a contradiction that $\preceq_1$ and $\preceq_2$ are incomparable. Then we find $a_i,b_i \in R\backslash \mathfrak{q}$ such that $0 \prec_1 a_1 \preceq_1 a_2$ but $0 \prec_2 a_2 \prec_2 a_1,$ and $0 \prec_2 b_1 \preceq_2 b_2$ but $0 \prec_1 b_2 \prec_1 b_1.$ Since $0 \prec_i -1,$ we may w.l.o.g. assume that $0 \prec a_i, b_i$ (otherwise replace $a_i$ with $-a_i,$ respectively $b_i$ with $-b_i$). Lemma \ref{mult} implies $a_1b_2 \prec_1 a_2b_1$ and $a_2b_1 \prec_2 a_1b_2.$ Since all $a_i,b_i$ are positive w.r.t. $\preceq,$ the contraposition of $\preceq \, \leq \, \preceq_i$ yields $a_1b_2 \prec a_2b_1$ and $a_2b_1 \prec a_1b_2,$ a contradiction. Hence, $\preceq_1$ and $\preceq_2$ are comparable. \vspace{1mm}

\noindent
Next, suppose that $-1 \prec_i 0$ for some $i.$ Then also $-1 \prec 0,$ because $\preceq \, \leq \, \preceq_i.$ Lemma \ref{iff} implies $0 \preceq x \Leftrightarrow 0 \preceq_i x.$ Together with Lemma \ref{-+} (if $y \notin \mathfrak{q}),$ respectively Lemma \ref{lem1} (if $y \in \mathfrak{q}),$ this yields
\[
x \preceq y \Leftrightarrow x-y \preceq 0 \Leftrightarrow x-y \preceq_i 0 \Leftrightarrow x \preceq_i y,
\]
whence $\preceq \, = \, \preceq_i.$ Since $\preceq \, \leq \, \preceq_1, \preceq_2,$ this finishes the proof.
\end{proof}

\begin{definition} A quasi-ordering $\preceq$ on $R$ is called \textit{trivial} if $R/{\sim}_{\preceq}$ has exactly two elements.
\end{definition}

\begin{remark} \label{remtriv} By (QR1), a trivial quasi-orderings admits the equivalence classes $[0] = \mathfrak{q}_{\preceq}$ and $[1] = R\backslash\mathfrak{q}_{\preceq}.$ There is a unique trivial quasi-ordering with support $\mathfrak{q},$ namely the one induced by the trivial valuation with support $\mathfrak{q}.$ We denote it by $\preceq_{\mathfrak{q}}.$
\end{remark}

\begin{proposition} \label{tree2} 
The trivial quasi-ordering $\preceq_{\mathfrak{q}}$ is the maximum of $(\mathcal{Q}_{\mathfrak{q}}(R),\leq).$
\begin{proof} Let $\preceq \: \in \mathcal{Q}_{\mathfrak{q}}(R),$ and let $x,y \in R$ such that $0 \preceq x \preceq y.$ If $x \in \mathfrak{q},$ then $x \preceq_{\mathfrak{q}} y$ by Remark \ref{remtriv}. If $x \notin \mathfrak{q},$ then also $y \notin \mathfrak{q}$ since $0 \preceq x \preceq y.$ Consequently, $x \sim_{\preceq_{\mathfrak{q}}} 1 \sim_{\preceq_{\mathfrak{q}}} y.$ Therefore, $\preceq \, \leq \, \preceq_{\mathfrak{q}},$ so $\preceq_{\mathfrak{q}}$ is the maximum of $\mathcal{Q}_{\mathfrak{q}}(R).$
\end{proof}
\end{proposition}

\begin{theorem} \label{treering} $\mathrm{(the \; tree \; structure \; theorem)}$ \\ Let $R$ be a ring and $\mathfrak{q}$ a two-sided completely prime ideal of $R.$ Then $(\mathcal{Q}_{\mathfrak{q}}(R),\leq)$ is a rooted tree.
\begin{proof} By Proposition \ref{tree1}, $(\mathcal{Q}_{\mathfrak{q}}(R),\leq)$ is a partially ordered set. Proposition \ref{tree3} implies that, given a quasi-ordering $\preceq \, \in \mathcal{Q}_{\mathfrak{q}}(R),$ its coarsenings in $\mathcal{Q}_{\mathfrak{q}}(R)$ are linearly ordered. Moreover, $\mathcal{Q}_{\mathfrak{q}}(R)$ admits a maximum, namely the trivial quasi-ordering with support $\mathfrak{q}$ (Proposition \ref{tree2}). Thus, $(\mathcal{Q}_{\mathfrak{q}}(R),\leq)$ is a rooted tree.
\end{proof}
\end{theorem}

\begin{corollary} \label{forest} Let $R$ be a ring. Then the partial ordering
\[
\preceq_1 \, \leq' \preceq_2 \, :\Leftrightarrow \, \preceq_1 \leq \, \preceq_2 \textrm{and } \mathfrak{q}_{\preceq_2} \subseteq \mathfrak{q}_{\preceq_1} 
\]
yields a partition of $\mathcal{Q}(R)$ into the rooted trees $\mathcal{Q}_{\mathfrak{q}}(R),$ where $\mathfrak{q}$ runs over all two-sided completely prime ideals of $R.$ Moreover, $(\mathcal{Q}(R),\leq')$ is a tree.
\end{corollary}
\begin{proof} Note that $\preceq_1 \, \leq' \, \preceq_2$ if and only if $\preceq_1 \, \leq \, \preceq_2$ and $\mathfrak{q}_1 = \mathfrak{q}_2.$ Therefore, both claims easily follow from Theorem \ref{treering}. 
\end{proof}

\begin{example} \label{treeexp1} We give two examples of the tree structure theorem. 
\begin{enumerate}
\item The integers $\mathbb{Z}$ admit as support $\{0\}$ quasi-orderings the unique ordering $\leq,$ the trivial valuation $v_0,$ and the $p$-adic
valuation $v_p$ for any prime number $p \in \mathbb{N}.$ Therefore, the rooted tree $(\mathcal{Q}_{0}(\mathbb{Z}),\leq)$ is completely described by the following diagram, where a connecting line between two quasi-orderings means that the upper quasi-ordering is coarser than the lower one: \vspace{1mm}

\begin{center}
\begin{tikzpicture}
\node {$v_0$}
    child{ node {$\leq$}}
		child{ node {$v_2$}}
		child{ node {$v_3$}}
		child{ node {$\ldots$}}         
;
\end{tikzpicture}
\end{center} \vspace{1mm}

\item Consider the following quasi-orderings on $\mathcal{Q}_{0}(\mathbb{Q}[X])\colon$ \vspace{2mm}

\begin{itemize}
\item[(i)] the Archimedean ordering $P_a$ defined by $0 \leq f :\Leftrightarrow 0 \leq f(\pi)$ in $\mathbb{R},$ \vspace{1mm}

\item[(ii)] the non-Archimedean ordering $P_{na}$ defined by $\mathbb{Q} < X,$ \vspace{1mm}

\item[(iii)] the non-Archimedean ordering $P'_{na}$ defined by $X < \mathbb{Q},$ \vspace{1mm}

\item[(iv)] the degree valuation $v_{\textrm{deg}}\colon \mathbb{Q}[X] \to \mathbb{Z} \cup \{\infty\}, \: f \mapsto -\textrm{deg}(f),$ \vspace{1mm}

\item[(v)] the valuation $w\colon \mathbb{Q}[X] \to \mathbb{Z}, \, 0 \neq \sum a_iX^i \mapsto \min\{v_p(a_i)\colon a_i \neq 0\}$ for some $p$-adic valuation $v_p$ on $\mathbb{Q}$ (cf. \cite[Theorem 2.2.1]{Prestel}). 
\end{itemize} \vspace{1mm}

\noindent
Then the respective part of $(\mathcal{Q}_0(\mathbb{Q}[X]),\leq)$ looks as follows: \vspace{1mm}

\begin{center}
\begin{tikzpicture}
\tkzDefPoint(0,0){a}
\tkzDefPoint(-3,-1){b}
\tkzDefPoint(-1,-1){c}
\tkzDefPoint(1,-1){d}
\tkzDefPoint(3,-1){e}
\tkzDefPoint(-4,-2){f}
\tkzDefPoint(-3,-2){g}
\tkzDefPoint(-2,-2){h}
\draw (0,0) -- (-3,-1);
\draw (0,0) -- (-1,-1); 
\draw (0,0) -- (1,-1); 
\draw (0,0) -- (3,-1); 
\draw (0,0) -- (-2.4,-0.8);
\draw (-3.4,-1.4) -- (-4,-2);
\draw (-3,-1.4) -- (-3,-2);
\draw (-2.6,-1.4) -- (-2,-2);
\tkzLabelPoint[above](a){$v_0$}
\tkzLabelPoint[below](b){$v_{\textrm{deg}}$}
\tkzLabelPoint[below](c){$w$}
\tkzLabelPoint[below](d){$P_{\textrm{a}}$}
\tkzLabelPoint[below](e){$\ldots$}
\tkzLabelPoint[below](f){$P_{\textrm{na}}$}
\tkzLabelPoint[below](g){$P'_{\textrm{na}}$}
\tkzLabelPoint[below](h){$\ldots$} 
\end{tikzpicture}
\end{center}
\end{enumerate}
\end{example}
 \vspace{1mm}

\noindent
Next, we discuss the failure of the tree structure theorem for $(\mathcal{Q}(R),\leq).$ \vspace{1mm}

\noindent
The following proposition and Theorem \ref{treering} particularly imply that $(\mathcal{Q}(R),\leq)$ is a rooted tree if and only if $R$ has exactly one two-sided completely prime ideal. \vspace{1mm}

\begin{proposition} \label{minex} Let $R$ be a ring. Then $(\mathcal{Q}(R),\leq)$ admits a maximum if and only if $R$ has exactly one two-sided completely prime ideal.
\begin{proof} The if-implication was already shown in Proposition \ref{tree2}. \vspace{1mm}

\noindent 
Conversely, suppose that $R$ has at least two different such ideals, and assume that $(\mathcal{Q}(R),\leq)$ admits a maximum, say $\preceq.$ Then Proposition \ref{tree2} tells us that $\preceq$ must be trivial, say $\preceq \, = \, \preceq_{\mathfrak{q}}.$ Let $\mathfrak{q} \neq \mathfrak{p}$ be another two-sided completely prime ideal of $R.$ From $\preceq_{\mathfrak{p}} \, \leq \, \preceq_{\mathfrak{q}},$ it follows that $\mathfrak{p} \subsetneq \mathfrak{q}.$ Let $y \in \mathfrak{q}\backslash \mathfrak{p}.$ Then we have $0 \preceq_{\mathfrak{p}} 1 \preceq_{\mathfrak{p}} y,$ but $y \prec_{\mathfrak{q}} 1,$ a contradiction to $\preceq_{\mathfrak{p}} \, \leq \, \preceq_{\mathfrak{q}}.$ Thus, $(\mathcal{Q}(R),\leq)$ admits no maximum.
\end{proof}
\end{proposition}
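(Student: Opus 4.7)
The forward direction is essentially a corollary of Theorem~\ref{treering}: Theorem~\ref{dicho} together with the remark following Definition~\ref{oring} (and the easy check that, for a valuation $v$, cancellativity of $\Gamma_v$ makes $v^{-1}(\infty)$ a completely prime ideal) shows that the support of any quasi-ordering on $R$ is a prime ideal. Hence if $R$ has only one prime ideal $\mathfrak{q}$, then $\mathcal{R}=\mathcal{R}_\mathfrak{q}$, which is a tree.

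For the converse, suppose $R$ has at least two distinct prime ideals; my plan is to show that $(\mathcal{R},\leq)$ admits no maximum, and so is not a tree. Assume for contradiction that $\preceq$ is a maximum of $(\mathcal{R},\leq)$. Then it is, in particular, the maximum of the subset $\mathcal{R}_{\mathfrak{q}_\preceq}$, and Proposition~\ref{tree2} forces $\preceq=\preceq_\mathfrak{q}$ with $\mathfrak{q}:=\mathfrak{q}_\preceq$. Pick any other prime ideal $\mathfrak{p}\neq\mathfrak{q}$; maximality yields $\preceq_\mathfrak{p}\leq\preceq_\mathfrak{q}$.

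To reach a contradiction I would first record the auxiliary monotonicity $\preceq_1\leq\preceq_2\Rightarrow\mathfrak{q}_{\preceq_1}\subseteq\mathfrak{q}_{\preceq_2}$, obtained by feeding the pairs $(0,x)$ and $(x,0)$ into the defining implication of $\leq$ for $x\in\mathfrak{q}_{\preceq_1}$. Applied in our setup this gives $\mathfrak{p}\subsetneq\mathfrak{q}$. Now choose $y\in\mathfrak{q}\setminus\mathfrak{p}$: since $\preceq_\mathfrak{p}$ is trivial, $y\sim_\mathfrak{p} 1$, so $0\preceq_\mathfrak{p} 1\preceq_\mathfrak{p} y$, and $\preceq_\mathfrak{p}\leq\preceq_\mathfrak{q}$ transports this to $1\preceq_\mathfrak{q} y$. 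On the other hand $y\in\mathfrak{q}$ means $y\sim_\mathfrak{q} 0$, which together with (QR1) gives $y\prec_\mathfrak{q} 1$, the required contradiction.

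The only step that is not pure unpacking of definitions is the auxiliary monotonicity of supports; this is the mild obstacle, but it reduces to a one-line check from the definition of $\leq$ and is already implicit in Corollary~\ref{forest}.
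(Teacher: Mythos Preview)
Your proposal is correct and follows essentially the same approach as the paper's proof: both directions match step for step, with your version spelling out a few details (why the support is prime, why the maximum must be the trivial quasi-ordering, and the support-monotonicity $\preceq_1\leq\preceq_2\Rightarrow\mathfrak{q}_{\preceq_1}\subseteq\mathfrak{q}_{\preceq_2}$) that the paper leaves implicit or cites from elsewhere.
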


\begin{remark} A commutative ring $R$ has exactly one prime ideal if and only if every $x \in R$ is either nilpotent or a unit.
\end{remark}

\noindent 
The previous discussion raises the question whether the subset $\{\preceq \, \in \mathcal{Q}(R)\colon {\preceq} \leq \, \preceq_{\mathfrak{q}}\}$ of $\mathcal{Q}(R)$ is a rooted tree. The notion of convexity yields a precise characterisation of the quasi-orderings in $\mathcal{Q}(R)$ that are finer than $\preceq_{\mathfrak{q}}$ (see Proposition \ref{qcomp}). 

\begin{definition} Let $(R,\preceq)$ be a quasi-ordered ring and $M \subseteq R$ an additive subgroup of $R.$ Then $M$ is \textit{convex}, if $0 \preceq x \preceq y \in M$ implies $x \in M$ for all $x,y \in R.$
\end{definition}

\begin{example} \hspace{7cm}
\begin{enumerate}
\item If $(R,\preceq)$ is a quasi-ordered ring, then $\mathfrak{q}_{\preceq}$ is the smallest convex subgroup of $R.$ \vspace{1mm}

\item Let $R = \mathbb{R}[X],$ let $P$ be the unique ordering on $R$ with $0 <_P X <_P \mathbb{R}_{>0},$ and let $M = (X).$ Then $M$ is convex, and $\mathfrak{q}_{P} = \{0\} \subsetneq M.$ \vspace{1mm}

\item Replace $M$ in (2) with $M' = (X+1).$ Then $0 <_P X <_P X+1 \in M',$ but $X \notin M'.$ Thus, $M'$ is not convex.
\end{enumerate}
\end{example}

\begin{proposition} \label{qcomp} Let $R$ be a ring, $\preceq$ a quasi-ordering on $R$ and $\mathfrak{q}$ a two-sided completely prime ideal of $R.$ The following are equivalent:
\begin{enumerate}
\item[$\mathrm{(1)}$] $\mathfrak{q}$ is $\preceq$-convex, \vspace{1mm}

\item[$\mathrm{(2)}$] $\preceq \, \leq \, \preceq_1$ for some quasi-ordering $\preceq_1$ with support $\mathfrak{q},$ \vspace{1mm}

\item[$\mathrm{(3)}$] $\preceq \, \leq \preceq_{\mathfrak{q}}.$ \vspace{1mm}

\end{enumerate}
\begin{proof} We first show that (1) implies (2) and (3). Let $0 \preceq x \preceq y.$ We have to show that $x \preceq_{\mathfrak{q}} y.$ For $x \in \mathfrak{q}$ this is trivial. If $x \notin \mathfrak{q},$ then $y \notin \mathfrak{q}$ by (1), and therefore $x \sim_{\preceq_{\mathfrak{q}}} 1 \sim_{\preceq_{\mathfrak{q}}} y.$
\vspace{1mm}

\noindent
If $\preceq$ is finer than some quasi-ordering $\preceq_1$ with support $\mathfrak{q},$ then $\preceq \, \leq \, \preceq_1 \, \leq \, \preceq_{\mathfrak{q}},$ whence $\preceq \, \leq \, \preceq_{\mathfrak{q}}.$ Thus, (2) implies (3). \vspace{1mm}

\noindent
Finally, we show that (3) implies (1). So suppose that $0 \preceq x \preceq y \in \mathfrak{q}.$ Then also $0 \preceq_{\mathfrak{q}} x \preceq_{\mathfrak{q}} y$ by (3) and Lemma \ref{signs1}. Therefore, $x \in \mathfrak{q}.$
\end{proof}
\end{proposition}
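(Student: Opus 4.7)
The cleanest route is the cycle $(1) \Rightarrow (3) \Rightarrow (2) \Rightarrow (1)$, which avoids invoking Theorem~\ref{treering} a second time.

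For $(1) \Rightarrow (3)$, I would unfold the definition of the finer relation directly. Let $0 \preceq x \preceq y$; I need $x \preceq_{\mathfrak{q}} y$. By Remark~\ref{remtriv} the trivial quasi-ordering $\preceq_{\mathfrak{q}}$ has exactly two equivalence classes, $[0] = \mathfrak{q}$ and $[1] = R \setminus \mathfrak{q}$, with the former strictly below the latter. If $x \in \mathfrak{q}$ the claim is immediate; if $x \notin \mathfrak{q}$, then the contrapositive of convexity forbids $y \in \mathfrak{q}$, so $x \sim_{\preceq_{\mathfrak{q}}} y$, as required.

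The implication $(3) \Rightarrow (2)$ is free: take $\preceq_1 := \preceq_{\mathfrak{q}}$, which has support $\mathfrak{q}$ by Remark~\ref{remtriv}.

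For $(2) \Rightarrow (1)$, suppose $\preceq \leq \preceq_1$ with $\mathfrak{q}_{\preceq_1} = \mathfrak{q}$, and take $0 \preceq x \preceq y$ with $y \in \mathfrak{q}$. The key is to extract two pieces of information from the finer relation: applied to the chain $0 \preceq 0 \preceq x$ it yields $0 \preceq_1 x$, and applied to $0 \preceq x \preceq y$ it yields $x \preceq_1 y$. Since $y \sim_{\preceq_1} 0$, transitivity collapses $0 \preceq_1 x \preceq_1 y$ to $x \sim_{\preceq_1} 0$, so $x \in \mathfrak{q}_{\preceq_1} = \mathfrak{q}$. This establishes convexity. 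I do not foresee a serious obstacle; the only subtlety worth flagging is that $\preceq \leq \preceq_1$ transmits information only about \emph{non-negative} chains, so one must manufacture $0 \preceq_1 x$ through an auxiliary application of the definition rather than read it off from $\preceq_1$ itself. Everything else is unpacking definitions.
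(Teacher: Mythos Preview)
Your argument is correct. The step $(1)\Rightarrow(3)$ is identical to the paper's, and your direct proof of $(2)\Rightarrow(1)$ is the same computation the paper carries out for $(3)\Rightarrow(1)$ via Lemma~\ref{signs1}, only run with a general $\preceq_1$ of support $\mathfrak{q}$ in place of $\preceq_{\mathfrak{q}}$.

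The one genuine difference from the paper is the logical routing. The paper closes the equivalence by proving $(2)\Rightarrow(3)$, and it does so by appealing to Theorem~\ref{treering} (really only Proposition~\ref{tree2} is needed: any $\preceq_1$ with support $\mathfrak{q}$ is finer than $\preceq_{\mathfrak{q}}$, whence $\preceq\leq\preceq_1\leq\preceq_{\mathfrak{q}}$ by transitivity). Your cycle $(1)\Rightarrow(3)\Rightarrow(2)\Rightarrow(1)$ bypasses this entirely and keeps the proposition self-contained at the level of the definitions, which is a modest gain in economy; the paper's route, on the other hand, makes the role of $\preceq_{\mathfrak{q}}$ as a maximum more visible.
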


\begin{corollary} \label{convqo} Let $R$ be a ring and $\mathfrak{q}$ a two-sided completely prime ideal of $R.$ Then $$\mathcal{Q}_{\mathfrak{q}}(R)' := \{ \preceq \; \in \mathcal{Q}(R)\colon \mathfrak{q} \textrm{ is } \preceq\textrm{-convex}\},$$ equipped with $\leq,$ is a partially ordered set admitting a maximum, namely $\preceq_{\mathfrak{q}}.$
\end{corollary}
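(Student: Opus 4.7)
The plan is to deduce this corollary almost directly from Proposition \ref{qcomp} together with the poset structure established in Proposition \ref{tree1}.

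First I would observe that $(\mathcal{R},\leq)$ is a partially ordered set by Proposition \ref{tree1}, and that any subset of a poset inherits the partial order structure. In particular, the restriction of $\leq$ to $\mathcal{R}_{\mathfrak{q}}'$ is reflexive, transitive and anti-symmetric, so $(\mathcal{R}_{\mathfrak{q}}',\leq)$ is a poset.

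Next I would invoke Proposition \ref{qcomp}, specifically the equivalence of (1) and (3): a quasi-ordering $\preceq\,\in\mathcal{R}$ belongs to $\mathcal{R}_{\mathfrak{q}}'$ if and only if $\preceq\,\leq\,\preceq_{\mathfrak{q}}$. Consequently $\preceq_{\mathfrak{q}}$ is an upper bound for every element of $\mathcal{R}_{\mathfrak{q}}'$ with respect to $\leq$.

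Finally, it remains to verify that $\preceq_{\mathfrak{q}}$ itself lies in $\mathcal{R}_{\mathfrak{q}}'$, i.e.\ that $\mathfrak{q}$ is $\preceq_{\mathfrak{q}}$-convex. This is immediate: if $0\preceq_{\mathfrak{q}} x\preceq_{\mathfrak{q}} y$ with $y\in\mathfrak{q}$, then $y\sim_{\preceq_{\mathfrak{q}}} 0$ since $\mathfrak{q}$ is the support of $\preceq_{\mathfrak{q}}$, which forces $x\sim_{\preceq_{\mathfrak{q}}} 0$ and hence $x\in\mathfrak{q}$. Alternatively one can simply appeal to Proposition \ref{qcomp} with $\preceq\,:=\,\preceq_{\mathfrak{q}}$ and use reflexivity of $\leq$. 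Combined with the previous paragraph, this shows that $\preceq_{\mathfrak{q}}$ is the maximum of $(\mathcal{R}_{\mathfrak{q}}',\leq)$.

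There is no real obstacle here; the statement is essentially a packaging of Proposition \ref{qcomp}. The only minor point one must not overlook is the verification that $\preceq_{\mathfrak{q}}$ actually belongs to $\mathcal{R}_{\mathfrak{q}}'$, since the claim that it is a \emph{maximum} (rather than merely a supremum) requires membership in the set.
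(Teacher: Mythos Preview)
Your proof is correct and follows the same approach as the paper, which simply states that the result follows immediately from Proposition~\ref{tree1} and Proposition~\ref{qcomp}. You have merely spelled out the details that the paper leaves implicit, including the minor point that $\preceq_{\mathfrak{q}}$ itself lies in $\mathcal{R}_{\mathfrak{q}}'$.
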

\begin{proof} This follows immediately from Proposition \ref{tree1} and Proposition \ref{qcomp}. 
\end{proof}

\noindent
However, $(\mathcal{Q}_{\mathfrak{q}}(R)',\leq),$ and therefore a fortiori $(\mathcal{Q}(R),\leq),$ is in general not a tree. We may for instance consider the following valuations (cf. \cite[Theorem 2.2.1]{Prestel}): \vspace{1mm}

\begin{itemize}
\item[] $v\colon \mathbb{R}[X,Y] \to \mathbb{Z} \times \mathbb{Z}, \; 0 \neq \sum_{i,j} a_{ij}X^iY^j \mapsto \min\{i(1,0)+j(0,1)\colon a_{ij} \neq 0\},$ \vspace{1mm}
 
\item[] $w\colon \mathbb{R}[X,Y] \to \mathbb{Z}, \; 0 \neq \sum_{i,j} a_{ij}X^iY^j \mapsto \min\{i+j\infty\colon a_{ij} \neq 0\},$ \vspace{1mm}

\item[] $u\colon \mathbb{R}[X,Y] \to \mathbb{Z}, \; 0 \neq \sum_{i,j} a_{ij}X^iY^j \mapsto \min\{j\colon a_{ij} \neq 0 \textrm{ for some } i\},$

\end{itemize} \vspace{1mm}

\noindent
where $\mathbb{Z} \times \mathbb{Z}$ is equipped with the inverse lexicographic ordering.

\begin{lemma} \label{counterexp} Consider the valuations $v,w$ and $u$ on $\mathbb{R}[X,Y].$
\begin{enumerate}
\item $(Y)$ is convex w.r.t. $v,w$ and $u.$ \vspace{1mm}

\item $v \leq w,u,$ but neither $w \leq u,$ nor $u \leq w.$ 
\end{enumerate}
\end{lemma}
\begin{proof} In this proof, when we consider polynomials in $\mathbb{R}[X,Y],$ we may w.l.o.g. assume that they are monomials. This is due to the definition of $v,w$ and $u.$ \vspace{1mm}

\begin{enumerate}
\item We have to show that if $f \in (Y)$ and $v(f) \leq v(g),$ then also $g \in (Y)$ for any $g \in \mathbb{R}[X,Y],$ and analogously for $w$ and $u.$ For either of these valuations, this is obviously true by the way they are defined. \vspace{1mm}

\item We first show $v \leq w,u.$ Suppose that $u(g) < u(f).$ Then the power of $Y$ in $f$ is bigger than the one in $g.$ Thus, also $v(g) < v(f).$ Hence, $v \leq u.$ Likewise, suppose that $w(g) < w(f).$ Then $Y$ does not appear in $g,$ and $Y$ appears in $f$ or the power of $X$ in $f$ is strictly bigger than the power of $X$ in $g.$ Hence, $v(g) < v(f).$ Therefore, also $v \leq w.$ \vspace{1mm}

\noindent
Clearly $w \not\leq u,$ since $w \leq u$ would imply that $(Y) = \mathfrak{q}_w \subseteq \mathfrak{q}_u = (0),$ a contradiction. Now let $f = X^2$ and $g = X.$ Then
$w(g) = 1 < 2 = w(f),$ but $u(g) = u(f) = 0,$ i.e. $u(g) \not < u(f).$ Thus, $u \not\leq w.$
\end{enumerate}
\end{proof}

\noindent
The failure of both properties of a rooted tree over $(\mathbb{R}[X,Y],\leq)$ can be visualised by the following diagram, where $v_0$ and $v_{(Y)}$ denote the trivial valuations with support $\{0\},$ respectively $(Y).$

\begin{center}
\begin{tikzpicture}
\tkzDefPoint(1,2){t}
\tkzDefPoint(0,1){u}
\tkzDefPoint(2,1){w}
\tkzDefPoint(1,0){v}
\tkzDefPoint(-1,2){s}
  \draw (-1,2) -- (0,1) -- (1,0) -- (2,1) -- (1,2) -- (0,1);
\tkzLabelPoint[below](v){$v$}
\tkzLabelPoint[left](u){$u$}
\tkzLabelPoint[right](w){$w$}
\tkzLabelPoint[above](s){$v_0$}
\tkzLabelPoint[above](t){$v_{(Y)}$}
\end{tikzpicture}
\end{center}

\noindent
We conclude this section by showing that the statement of Theorem \ref{treering} can nevertheless be improved, however, at the cost of uniformity. More precisely, we will show that the set $\mathcal{Q}_{\mathfrak{q}}(R)''$ consisting of all valuations with support $\mathfrak{q}$ and all $\mathfrak{q}$-convex orderings is a rooted tree.
\vspace{1mm}

\noindent
By Proposition \ref{tree1} and Proposition \ref{qcomp}, it remains to verify that $P \leq \, \preceq_1, \preceq_2$ implies $\preceq_1 \leq \preceq_2$ or $\preceq_2 \leq \preceq_1$ for all $P, \preceq_1,\preceq_2 \, \in \mathcal{Q}(R)_{\mathfrak{q}}''.$ Note that we wrote $P$ for an ordering, because for a valuation $v$ we would have $\mathfrak{q}_v = \mathfrak{q}_{\preceq_1} = \mathfrak{q}_{\preceq_2}$ by definition of $\mathcal{Q}_\mathfrak{q}(R)'',$ whence the claim follows from Theorem \ref{treering}. We distinguish whether $\preceq_1$ and $\preceq_2$ are orderings or valuations. \vspace{1mm}

\noindent
If $\preceq_1$ and $\preceq_2$ are orderings, this is a standard result from real algebra. The result is also known if both, $\preceq_1$ and $\preceq_2,$ are valuations (cf. \cite[Lemma 4.2]{MarshZhang}). Therefore, we are left to deal with the case where $P \leq v,Q$ for some valuation $v$ with support $\mathfrak{q}$ and some orderings $P,Q$ such that $\mathfrak{q}$ is convex with respect to the two of them.

\begin{lemma} Let $R$ be a ring, $v$ a valuation, and $P,Q$ orderings on $R$ such that $P \leq v,Q.$ Then $Q \leq v$ if and only if $\mathfrak{q}_Q \subseteq \mathfrak{q}_v.$
\begin{proof} The only-if-implication is trivial. For the converse we have to show that $0 \leq_Q x \leq_Q y$ implies $v(y) \leq v(x).$ If $x \in \mathfrak{q}_Q \subseteq \mathfrak{q}_v$ this is clear. If $x \notin \mathfrak{q}_Q,$ then also $y \notin \mathfrak{q}_Q.$ Since $P \subseteq Q,$ we have $Q = P \cup \mathfrak{q}_Q,$ whence $x,y \in P.$ Thus, $0 \leq_P x \leq_P y,$ so $P \leq v$ implies $v(y) \leq v(x).$ 
\end{proof}
\end{lemma}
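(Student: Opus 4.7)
The only-if direction is a one-liner: if $Q \leq v$ and $x \in \mathfrak{q}_Q$, then $x \sim_Q 0$ gives the chain $0 \leq_Q x \leq_Q 0$, which by definition of $Q \leq v$ forces $v(0) \leq v(x)$ and hence $x \in \mathfrak{q}_v$. For the converse I would assume $\mathfrak{q}_Q \subseteq \mathfrak{q}_v$, fix $x, y \in R$ with $0 \leq_Q x \leq_Q y$, and aim at $v(y) \leq v(x)$. The case $x \in \mathfrak{q}_Q \subseteq \mathfrak{q}_v$ is trivial, since then $v(x) = \infty$, so I would focus on $x \notin \mathfrak{q}_Q$; convexity of the support (if $y \sim_Q 0$ the chain would force $x \sim_Q 0$) then also gives $y \notin \mathfrak{q}_Q$.

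The central ingredient, and the step that will require the most care, is the decomposition $\{z \in R : 0 \leq_Q z\} = \{z \in R : 0 \leq_P z\} \cup \mathfrak{q}_Q$ of the $Q$-positive cone, valid for any pair of orderings with $P \leq Q$. The inclusions $\supseteq$ are immediate from the definitions, and conversely, for $z$ with $0 \leq_Q z$ but $z \not\geq_P 0$, totality of $P$ forces $-z \geq_P 0$, whence $-z \geq_Q 0$ via $P \leq Q$, so $z \sim_Q 0$. Applying this to our $x$ and $y$ places both in the $P$-positive cone, so $0 \leq_P x$ and $0 \leq_P y$.

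I would finish by examining $y - x$, which is $Q$-non-negative and hence either $P$-non-negative or in $\mathfrak{q}_Q$. In the first sub-case the full chain $0 \leq_P x \leq_P y$ is in place and $P \leq v$ immediately yields $v(y) \leq v(x)$. In the second sub-case $y - x \in \mathfrak{q}_Q \subseteq \mathfrak{q}_v$, so $v(y - x) = \infty$; when $v(x) < \infty$ the two terms $y - x$ and $x$ have distinct valuations, so the strict form of the ultrametric inequality (a standard consequence of (V4)) gives $v(y) = v((y-x) + x) = v(x)$, while for $v(x) = \infty$ the bound $v(y) \leq v(x)$ is trivial. The main obstacle is exactly this last sub-case, in which $x$ and $y$ are $Q$-equivalent but not directly $P$-comparable: one cannot invoke $P \leq v$ and must route through (V4) on the valuation, which is the only place the hypothesis $\mathfrak{q}_Q \subseteq \mathfrak{q}_v$ is genuinely used.
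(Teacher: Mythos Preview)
Your argument is correct and follows the same route as the paper: both use the decomposition of the $Q$-positive cone as $P \cup \mathfrak{q}_Q$ to pull the chain $0 \leq_Q x \leq_Q y$ back to $P$ and then invoke $P \leq v$. The paper, however, passes directly from ``$x,y \in P$'' to ``$0 \leq_P x \leq_P y$'', which strictly speaking only yields $0 \leq_P x$ and $0 \leq_P y$; your separate treatment of the sub-case $y-x \in \mathfrak{q}_Q \subseteq \mathfrak{q}_v$ via the strict ultrametric inequality is the honest way to close this step and is in fact a small refinement over the printed proof.
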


\enlargethispage{2mm}

\noindent
By definition of $\mathcal{Q}_{\mathfrak{q}}(R)'',$ we know that $\mathfrak{q}_Q \subseteq \mathfrak{q}_v$ applies. Thus, we have proven:

\begin{proposition} \label{convtree} Let $R$ be a ring and $\mathfrak{q}$ a two-sided completely prime ideal of $R.$ Then the set $(\mathcal{Q}_\mathfrak{q}(R)'',\leq)$ is a rooted tree.
\end{proposition}

\begin{corollary} \label{xyz} Let $R$ be a ring and $\mathfrak{q}$ a two-sided completely prime ideal of $R.$ Then $(\mathcal{Q}_\mathfrak{q}(R)''',\leq)$ is a tree, where $\mathcal{Q}_\mathfrak{q}(R)'''$ denotes the set of all valuations with support $\mathfrak{q}$ and all orderings with support contained in $\mathfrak{q}.$
\end{corollary}
\begin{proof} This applies, because for the proof of the previous proposition we only used convexity of $\mathfrak{q}$ to ensure that $(\mathcal{Q}_{\mathfrak{q}}(R)'',\leq)$ has a maximum
\end{proof}


\begin{summary} \label{summary} We briefly summarise our results. Let $R$ be a ring, $\mathfrak{q}$ a two-sided completely prime ideal of $R,$ and $\leq$ the coarsening relation from Definition \ref{defcoarse}.
\begin{enumerate}
\item $(\mathcal{Q}(R),\leq)$ is a partially ordered set, where $\mathcal{Q}(R)$ denotes the set of all quasi-orderings on $R$ (Proposition \ref{tree1}). \vspace{1mm}

\item $(\mathcal{Q}_{\mathfrak{q}}(R),\leq)$ is a tree, where $\mathcal{Q}_{\mathfrak{q}}(R)$ denotes the set of all quasi-orderings on $R$ with support $\mathfrak{q}$ (Theorem \ref{treering}). \vspace{1mm}

\item The set $(\mathcal{Q}_{\mathfrak{q}}(R)',\leq)$ of all $\mathfrak{q}$-convex quasi-orderings is a partially ordered set admitting $\preceq_{\mathfrak{q}}$ as a maximum (Corollary \ref{convqo}). \vspace{1mm}

\item The set $(\mathcal{Q}_{\mathfrak{q}}(R)'',\leq)$ of all valuations on $R$ with support $\mathfrak{q}$ and all $\mathfrak{q}$-convex orderings on $R$ is a rooted tree (Proposition \ref{convtree}) \vspace{1mm}

\item The set $(\mathcal{Q}_{\mathfrak{q}}(R)''',\leq)$ of all valuations on $R$ with support $\mathfrak{q}$ and all orderings on $R$ with support contained in $\mathfrak{q}$ is a tree. (Corollary \ref{xyz}).
\end{enumerate} \vspace{1mm}

\noindent
We have
$$\mathcal{Q}(R) \supseteq \mathcal{Q}_{\mathfrak{q}}(R)', \mathcal{Q}_{\mathfrak{q}}(R)''' \supseteq \mathcal{Q}_{\mathfrak{q}}(R)'' \supseteq \mathcal{Q}_{\mathfrak{q}}(R).$$
\end{summary}

\section{On the Tree Structure of Special and Manis Quasi-Orderings} \label{sectiontree2}

\noindent
In Section \ref{sectionspecialmanis} we introduced special quasi-orderings (Definition \ref{specialqo}) and Manis quasi-orderings (Definition \ref{manisqo}). Here, we establish the result that the set $\mathcal{Q}_{\mathfrak{q}}^S(R)$ of all special quasi-orderings with support $\mathfrak{q}$ on a ring $R$ is a rooted subtree of $\mathcal{Q}_{\mathfrak{q}}(R),$ and that the set $\mathcal{Q}^S(R)$ of all special quasi-orderings on $R$ is the ordered disjoint union of the rooted trees $\mathcal{Q}^S_{\mathfrak{q}}(R)$ (Theorem \ref{subtree}). We conclude this section by proving that the same applies to Manis quasi-orderings (Theorem \ref{subtree2}).

\begin{notation} Let $R$ be a ring and $\mathfrak{q}$ a two-sided completely prime ideal of $R.$ We denote by $\mathcal{Q}^S(R)$ the set of all special quasi-orderings on $R,$ and by $\mathcal{Q}_\mathfrak{q}^S(R)$ the set of all special quasi-orderings on $R$ with support $\mathfrak{q}.$ \vspace{1mm}

\noindent
The sets $\mathcal{Q}^M(R)$ and $\mathcal{Q}_{\mathfrak{q}}^M(R)$ are defined analogously for Manis quasi-orderings.
\end{notation}

\begin{definition} A (rooted) \textit{subtree} of a tree $(S,\leq_S)$ is a (rooted) tree $(T,\leq_T)$ such that 
\begin{enumerate}
\item $T \subseteq S$ and $\leq_T \; = \; \leq_S \cap \; T^2,$ \vspace{1mm}

\item $T$ is upward closed under $\leq_S,$ i.e. if $s,t \in S$ and $T \ni s < t,$ then also $t \in T.$
\end{enumerate}
\end{definition}

\noindent
Condition (1) of the previous Definition is obviously satisfied by the subsets $\mathcal{Q}_{\mathfrak{q}}^S(R)$ and $\mathcal{Q}_{\mathfrak{q}}^M(R)$ of $\mathcal{Q}_{\mathfrak{q}}(R)$ The following Lemma shows that (2) also applies.

\begin{lemma} \label{interplay} Let $R$ be a ring, and let $\preceq_1 \, \leq \, \preceq_2$ be quasi-orderings on $R.$ \vspace{1mm}

\begin{itemize}
\item[(1)] If $\preceq_1$ is special, then $\preceq_2$ is special. \vspace{1mm}

\item[(2)] If $\preceq_1$ is Manis, then $\preceq_2$ is Manis. 

\end{itemize}
\end{lemma}

\begin{proof} We show (1), the proof of (2) being analogue. From $\preceq_1 \, \leq \, \preceq_2$ it immediately follows that $\mathfrak{q}_{\preceq_1} \subseteq \mathfrak{q}_{\preceq_2}.$ Now let $x \in R\backslash \mathfrak{q}_2.$ Then also $x \in R \backslash \mathfrak{q}_1.$ So we find some $y \in R$ such that $0 \prec_1 1 \preceq_1 xy.$ Thus, $\preceq_1 \, \leq \, \preceq_2$ implies that $1 \preceq_2 xy,$ i.e. that $\preceq_2$ is special. 
\end{proof}

\begin{definition} If a partially ordered set $(X,\leq_X)$ is the disjoint union of partially ordered sets $(X_i,\leq_{X_i})_{i \in I},$ we say that $X$ is the \textit{ordered disjoint union} of the sets $X_i,$ if for all $x,y \in X\colon$
\[ 
x \leq_X y \Leftrightarrow \exists i \in I\colon x,y \in X_i \textrm{ and } x \leq_{X_i} y.
\]
\end{definition}

\begin{lemma} \label{specsup} Let $R$ be a ring and let $\preceq_1,\preceq_2$ be quasi-orderings on $R$ such that $\preceq_1$ is a special quasi-ordering. If $\preceq_1 \, \leq \, \preceq_2,$ then $\mathfrak{q}_{\preceq_1} = \mathfrak{q}_{\preceq_2}.$
\end{lemma}
\begin{proof} Assume not. Then $\mathfrak{q}_{\preceq_1} \subsetneq \mathfrak{q}_{\preceq_2}.$ So we find some $x \in \mathfrak{q}_{\preceq_2}\backslash\mathfrak{q}_{\preceq_1}.$ Since $\preceq_1$ is special, there is some $y \in R$ such that $0 \prec_1 1 \preceq_1 xy.$ Hence, $\preceq_1 \, \leq \, \preceq_2,$ implies $1 \preceq_2 xy.$ On the other hand, $xy \sim_{\preceq_2} 0,$ since $x \in \mathfrak{q}_{\preceq_2},$ a contradiction. Therefore, $\mathfrak{q}_{\preceq_1} = \mathfrak{q}_{\preceq_2}.$
\end{proof}

\begin{theorem} \label{subtree} Let $R$ be a ring, and let $\mathfrak{q}$ be a two-sided completely prime ideal of $R.$ Then
\begin{enumerate}
\item $(\mathcal{Q}^S_{\mathfrak{q}}(R),\leq)$ is a rooted subtree of $(\mathcal{Q}_{\mathfrak{q}}(R),\leq).$ \vspace{1mm}

\item $(\mathcal{Q}^S(R),\leq)$ is the ordered disjoint union of the rooted trees $(\mathcal{Q}^S_{\mathfrak{q}}(R),\leq).$ 
\end{enumerate}
\end{theorem}
\begin{proof} The set $\mathcal{Q}_{\mathfrak{q}}^S(R) = \mathcal{Q}^S(R) \cap \mathcal{Q}_{\mathfrak{q}}(R)$ is clearly a rooted tree, since $\mathcal{Q}_{\mathfrak{q}}(R)$ is a rooted tree (Theorem \ref{treering}), and since the trivial quasi-ordering $\preceq_{\mathfrak{q}}$ is special. Moreover, by Lemma \ref{interplay}(1), $\mathcal{Q}_{\mathfrak{q}}^S(R)$ is upward closed under $\leq.$ Hence, $(\mathcal{Q}_{\mathfrak{q}}^S(R),\leq)$ is a rooted subtree of $(\mathcal{Q}_{\mathfrak{q}}(R),\leq).$ \vspace{1mm}

\noindent
Obviously, $\mathcal{Q}^S(R)$ is the disjoint union of the trees $\mathcal{Q}_{\mathfrak{q}}^S(R).$ Moreover, if $\preceq_1 \leq \preceq_2$ are in $\mathcal{Q}^S(R),$ then Lemma \ref{specsup} yields $\mathfrak{q}_{\preceq_1} = \mathfrak{q}_{\preceq_1}.$ Hence, $\preceq_1, \preceq_2 \, \in \mathcal{Q}_{\mathfrak{q}}^S(R)$ for some two-sided completely prime ideal $\mathfrak{q}$ of $R.$ Therefore, $\mathcal{Q}^S(R)$ is the ordered disjoint union of the rooted subtrees $\mathcal{Q}^S_{\mathfrak{q}}(R)$ of $\mathcal{Q}_{\mathfrak{q}}(R).$
\end{proof}

\noindent
With the very same methods as in the proof of Theorem \ref{subtree}, we also get:

\begin{theorem} \label{subtree2} Let $R$ be a ring and let $\mathfrak{q}$ be a two-sided completely prime ideal of $R.$ Then
\begin{enumerate}
\item $(\mathcal{Q}^M_{\mathfrak{q}}(R),\leq)$ is a rooted subtree of $(\mathcal{Q}^S_{\mathfrak{q}}(R),\leq),$ respectively $(\mathcal{Q}_{\mathfrak{q}}(R),\leq).$ \vspace{1mm}

\item $(\mathcal{Q}^M(R),\leq)$ is the ordered disjoint union of the rooted trees $(\mathcal{Q}^M_{\mathfrak{q}}(R),\leq).$ 
\end{enumerate}
\end{theorem}

\noindent
The Theorems \ref{subtree} and \ref{subtree2} are an improvement over Corollary \ref{forest} in the sense that for special and Manis quasi-orderings the partition into the rooted trees $\mathcal{Q}_{\mathfrak{q}}^S(R),$ respectively $\mathcal{Q}_{\mathfrak{q}}^M(R),$ comes naturally. Recall that in the said corollary, the partial ordering $\leq'$ artificially enforced the supports to be equal.

\section{Applications of the Tree Structure Theorem} \label{sectionappl}

\noindent
As the main application of the tree structure theorem (Theorem \ref{treering}) we obtain that the trees $(\mathcal{Q}_{\mathfrak{q}}(R),\leq), (\mathcal{Q}(R),\leq'), (\mathcal{Q}^S(R),\leq),$ and $(\mathcal{Q}^M(R),\leq)$ are all spectral sets (Corollary \ref{corspectral}, Corollary \ref{spectral}, Theorem \ref{spectral2}), i.e. isomorphic to the spectrum of a commutative ring, partially ordered by inclusion. Hence, even if we start off with a non-commutative ring $R,$ its quasi-orderings, respectively its special and Manis quasi-orderings, can be realised by the prime ideals of some commutative ring. \vspace{1mm} 

\noindent 
Another, very immediate, application of Theorem \ref{treering} is that dependency is an equivalence relation on each of the sets $\mathcal{Q}_{\mathfrak{q}}(R),$ $\mathcal{Q}^S(R)$ and $\mathcal{Q}^M(R)$ (Proposition \ref{depequiv} and Corollary \ref{depequivspec}), where two quasi-orderings are called dependent if they admit a non-trivial common coarsening.

\begin{definition} \label{spectralpo} A partially ordered set $(X,\leq)$ is called \textit{spectral}, if it is order-isomorphic to $(\textrm{Spec}(S),\subseteq)$ for some commutative ring $S.$
\end{definition}

\noindent
It is a known result that any finite partially ordered set is spectral (cf. \cite{Hochster}, \cite[Theorem 2.10]{Lewis}). In \cite[Corollary 3.3]{LewisOhm}, Lewis and Ohm even showed that a partially ordered set $(X,\leq)$ is spectral, if for any $x \in X$ there are only finitely many $y \in X$ such that $y \leq x,$ respectively $x \leq y.$ This immediately implies:

\begin{proposition} Let $R$ be a ring and $X \subseteq \mathcal{Q}(R)$ such that any quasi-ordering $\preceq \, \in X$ has only finitely many coarsenings and refinements. Then $(X,\leq)$ is spectral.
\end{proposition}

\noindent
In the general case, our work relies on a further result by these authors. They also showed that a partially ordered set $(X,\leq)$ is a tree satisfying Kaplansky's properties (cf. \cite{Kaplansky}, \cite[Ch. 3]{Lewis}, or (K1) and (K2) from Proposition \ref{Kaplansky} below) if and only if $(X,\leq)$ is isomorphic to the spectrum of a B\'{e}zout domain (\cite[Theorem 4.2]{LewisOhm}), i.e. a commutative domain in which the sum of two principal ideals is again a principal ideal.

\begin{proposition} \label{Kaplansky} Let $R$ be a ring and let $\mathfrak{q}$ be a two-sided completely prime ideal of $R.$ The tree $\mathcal{Q}_{\mathfrak{q}}(R)$ has the following properties:
\begin{enumerate}
\item[$\mathrm{(K1)}$] Every chain in $\mathcal{Q}_{\mathfrak{q}}(R)$ has a supremum and an infimum. \vspace{1mm}

\item[$\mathrm{(K2)}$] If $\preceq_1 < \, \preceq_2 \; \in \mathcal{Q}_{\mathfrak{q}}(R),$ then we find elements $\preceq_3,\preceq_4 \; \in \mathcal{Q}_{\mathfrak{q}}(R)$ such that \vspace{1mm}

\begin{itemize}
\item[(i)] $\preceq_1 \, \leq \, \preceq_3 \, < \, \preceq_4 \, \leq \, \preceq_2,$ \vspace{1mm}

\item[(ii)] there is no element in $\mathcal{Q}_{\mathfrak{q}}(R)$ lying strictly in-between $\preceq_3$ and $\preceq_4.$
\end{itemize}
\end{enumerate}
\end{proposition}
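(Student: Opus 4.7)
My plan is to leverage the dichotomy of Theorem \ref{dicho} to reduce both statements to standard arguments about convex sub-(semi-)groups of the value semigroups of valuations. A preliminary observation is crucial: any chain in $\mathcal{R}_{\mathfrak{q}}$ contains at most one ordering. Indeed, if $P \leq Q$ are orderings with common support $\mathfrak{q}$, Lemma \ref{signs1} forces the positive cone of $P$ into that of $Q$; if $x$ lay in the difference, then $-x$ would lie in the positive cone of $P$, hence of $Q$, so $x \in Q \cap (-Q) = \mathfrak{q}_Q = \mathfrak{q} = \mathfrak{q}_P$, contradicting $x \not\sim_P 0$. Hence $P = Q$, and any ordering in a chain is automatically its $\leq$-minimum; both (K1) and (K2) therefore reduce to questions about chains of valuations, with perhaps a single ordering appended at the bottom.

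For (K1), I invoke the order-reversing bijection between coarsenings $w \geq v$ of a fixed valuation $v$ with support $\mathfrak{q}$ and convex sub-(semi-)groups of the value semigroup $\Gamma_v$. If a chain $\mathcal{C}$ has a maximum, that is its supremum; if it has a minimum, that is its infimum. Otherwise, for the supremum, pick any $v_\ast \in \mathcal{C}$; the members of $\mathcal{C}$ coarser than $v_\ast$ correspond to a chain of convex sub-(semi-)groups of $\Gamma_{v_\ast}$, whose union is again convex and produces the desired supremum. For the infimum, when the chain descends without a minimum, the natural candidate is the relation $\preceq_\infty$ on $R$ defined by $x \preceq_\infty y$ if and only if $x \preceq_i y$ for every index $i$; after checking that $\preceq_\infty$ is total -- which follows from the observation that $\sim_{v_i} \subseteq \sim_{v_j}$ whenever $v_i \leq v_j$ -- one verifies the axioms (QR1)--(QR4) and concludes via Theorem \ref{dicho} that $\preceq_\infty$ is a valuation with support $\mathfrak{q}$.

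For (K2), I split by the dichotomy. If both $\preceq_1 < \preceq_2$ are valuations, they correspond to convex sub-(semi-)groups $H_1 \subsetneq H_2$ of the value semigroup of $\preceq_1$. Picking any $\gamma \in H_2 \setminus H_1$, let $H_4$ denote the intersection of all convex sub-(semi-)groups containing $\gamma$ -- equivalently, the principal convex sub-(semi-)group generated by $\gamma$ -- and $H_3$ the union of all convex sub-(semi-)groups not containing $\gamma$. Both are convex, $H_1 \subseteq H_3 \subsetneq H_4 \subseteq H_2$, and no convex sub-(semi-)group strictly separates $H_3$ and $H_4$: any such $H$ either contains $\gamma$ (forcing $H \supseteq H_4$) or misses $\gamma$ (forcing $H \subseteq H_3$). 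The corresponding valuations $\preceq_3 < \preceq_4$ witness (K2). If instead $\preceq_1 = P$ is an ordering and $\preceq_2 = v$ a valuation, then any strictly intermediate element is necessarily a valuation (by the preliminary observation): either no such exists and $\preceq_3 := P$, $\preceq_4 := v$ already form a jump, or some intermediate valuation $v'$ exists, and the valuation-valuation case applied to $v' < v$ yields the required jump.

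The main obstacle I anticipate is verifying that the infimum construction in (K1) yields a genuine quasi-ordering; in particular, axiom (QR4) requires a non-trivial analysis, because the hypothesis $z \not\sim_\infty y$ need not transfer to each individual $v_i$, so one must handle separately the case $z \sim_{v_i} y$ via ultrametric-style estimates on $v_i(x+z)$ and $v_i(y+z)$. The supremum direction and the whole of (K2) are appreciably easier, since the bijection with convex sub-(semi-)groups of a single value semigroup does all the work.
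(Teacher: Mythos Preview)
Your proposal is correct, and in fact more carefully argued than the paper's own proof, but the two routes diverge visibly for (K2). The paper never invokes the correspondence with convex sub-(semi-)groups at all: for (K1) it simply declares that the supremum of a chain is the union of those $\preceq_i$ that are valuations (as subsets of $R\times R$), and the infimum is the unique ordering in the chain if there is one, else the intersection $\bigcap_i \preceq_i$; for (K2) it applies Zorn's Lemma to $A=\{\preceq\in\mathcal{R}_{\mathfrak{q}}:\preceq_1\le\preceq<\preceq_2\}$, using the supremum from (K1) to see that chains in $A$ are bounded above inside $A$, and takes $\preceq_3$ to be a maximal element of $A$ with $\preceq_4=\preceq_2$. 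Your argument for (K2) via the principal convex sub-(semi-)group generated by a single $\gamma$ is the classical constructive alternative; it buys you an explicit jump without the axiom of choice, at the price of importing the coarsening/convex-subgroup dictionary (which the paper has not set up in the present non-commutative semigroup setting). Conversely, the paper's Zorn argument is entirely internal to $(\mathcal{R}_{\mathfrak{q}},\le)$ and avoids any case split on orderings versus valuations.

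One remark on the obstacle you flag for (K1): the worry about (QR4) when $z\sim_{v_i} y$ at some levels dissolves once you notice that, for a chain of valuations, the relation $x+z\preceq_{i_0}y+z$ established at the one index $i_0$ with $z\not\sim_{i_0}y$ automatically propagates to every coarser $\preceq_j$ via Lemma~\ref{signs1} (since $0\preceq_{i_0} x+z$ by Lemma~\ref{positive}); for the finer indices $j\le i_0$ you already observed that $z\not\sim_j y$ by $\sim_{v_j}\subseteq\sim_{v_{i_0}}$. So the intersection is total and satisfies (QR4) without any ultrametric estimates---which is presumably why the paper feels entitled to say ``it's easy to see''.
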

\begin{proof} We first prove (K1). Let $(\preceq_i)_{i \in I}$ be a chain in $\mathcal{Q}_{\mathfrak{q}}(R),$ w.l.o.g. $\#I \geq 2.$ It is easy to see that the supremum is given by the union of all quasi-orderings such that $0 \prec_i -1$ (i.e. such that $\preceq_i$ is a valuation), and the infimum either by the only ordering (if $(\preceq_i)_{i \in I}$ contains an ordering) or else by the intersection of all $\preceq_i.$ \vspace{1mm}

\noindent
For (K2) let $\preceq_1 < \, \preceq_2 \, \in \mathcal{Q}_{\mathfrak{q}}(R).$ Define
\[
A := \{\preceq \; \in \mathcal{Q}_{\mathfrak{q}}(R)\colon {\preceq_1} {<} \, {\preceq} \, {\leq} \, {\preceq_2}\}.
\]
Then $(A,\leq)$ is a non-empty partially ordered set. Moreover, for any chain in $A,$ its union is an upper bound in $A.$ Thus, by Zorn's lemma, there is a maximal element $\preceq \; \in A.$ Hence, the choice $\preceq_3 \; = \; \preceq_1$ and $\preceq_4 \; = \; \preceq$ fulfils condition (K2).
\end{proof}

\begin{corollary} \label{corspectral} Let $R$ be a ring and $\mathfrak{q}$ a two-sided completely prime ideal of $R.$ Then $(\mathcal{Q}_{\mathfrak{q}}(R),\leq)$ is a spectral set and anti-isomorphic to the spectrum of a B\'{e}zout domain.
\begin{proof} The second statement follows from Theorem \ref{treering}, Proposition \ref{Kaplansky} and \cite[Theorem 4.2]{LewisOhm}. Recall that in our definition of a tree we reversed the ordering (see Remark \ref{reverse}), which is why we obtain an anti-isomorphism. \vspace{1mm}

\noindent
Now \cite[Proposition 8]{Hochster} states that a partially ordered set $(X,\leq)$ is spectral if and only if $(X,\leq^{0})$ is spectral, where $x \leq^{0} y :\Leftrightarrow y \leq x.$ Therefore, the first claim immediately follows from the second one. 
\end{proof}
\end{corollary} 

\begin{corollary} \label{spectral} Let $R$ be a ring Then $(\mathcal{Q}(R),\leq')$ is a spectral set, where $\leq'$ denotes the partial ordering introduced in Corollary \ref{forest}. Moreover, $(\mathcal{Q}(R),\leq')$ is anti-isomorphic to the spectrum of a B\'{e}zout domain.
\begin{proof} This is proven exactly like Corollary \ref{corspectral}, since Proposition \ref{Kaplansky} obviously also applies to the tree $(\mathcal{Q}(R),\leq').$
\end{proof}
\end{corollary}

\noindent
Proposition \ref{Kaplansky} is easily seen to apply to the trees $(\mathcal{Q}^S(R),\leq)$ and $(\mathcal{Q}^M(R),\leq)$ as well. Therefore, we further obtain:

\begin{theorem} \label{spectral2} Let $R$ be a ring. Then the trees $(\mathcal{Q}^S(R),\leq)$ and $(\mathcal{Q}^M(R),\leq)$ are spectral sets and anti-isomorphic to the spectrum of a B\'{e}zout domain
\begin{proof} This is proven just like Corollary \ref{corspectral}.
\end{proof}
\end{theorem} 

\noindent
The previous theorem also holds for $(\mathcal{Q}_{\mathfrak{q}}^S(R),\leq),$ respectively $(\mathcal{Q}_{\mathfrak{q}}^M(R),\leq).$ More generally, it applies to any tree of quasi-orderings that fulfils the two Kaplansky conditions (K1) and (K2). \vspace{2mm}

\noindent
We continue by introducing a dependency relation on the set $\mathcal{Q}(R).$ As an easy consequence of the tree structure theorem (Theorem \ref{treering}) we obtain that it defines an equivalence relation on $\mathcal{Q}_{\mathfrak{q}}(R)$ for any two-sided completely prime ideal $\mathfrak{q}$ of $R$ (Proposition \ref{depequiv}). The same applies to the sets $\mathcal{Q}^S(R)$ and $\mathcal{Q}^M(R)$ (Corollary \ref{depequivspec}). 

\begin{definition} \label{defdependent} Let $R$ be a ring. Two quasi-orderings $\preceq_1, \preceq_2$ on $R$ are called \textit{dependent}, written $\preceq_1 \, \sim \, \preceq_2,$ if there is a non-trivial quasi-ordering $\preceq$ on $R$ such that $\preceq_1,\preceq_2 \, \leq \, \preceq,$ or if $\preceq_1 \, = \, \preceq_2$ is trivial. Otherwise, we call them \textit{independent}.
\end{definition}

\begin{proposition} \label{depequiv} Let $R$ be a ring and $\mathfrak{q}$ a two-sided completely prime ideal of $R.$ Then dependency is an equivalence relation on $\mathcal{Q}_{\mathfrak{q}}(R).$
\begin{proof} Reflexivity and symmetry are both trivial. For transitivity suppose that $\preceq_1 \, \sim \, \preceq_2$ and $\preceq_2 \, \sim \, \preceq_3,$ w.l.o.g. all non-trivial. Then there are non-trivial quasi-orderings $\preceq, \preceq' \, \in \mathcal{Q}_{\mathfrak{q}}(R)$ such that $\preceq_1,\preceq_2 \, \leq \, \preceq$ and $\preceq_2, \preceq_3 \, \leq \, \preceq'.$ Hence, $\preceq_2 \, \leq \, \preceq, \preceq'.$ Theorem \ref{treering} implies that (w.l.o.g.) $\preceq \, \leq \, \preceq'.$ Thus, $\preceq_1, \preceq_3 \, \leq \, \preceq',$ i.e. $\preceq_1 \, \sim \, \preceq_3.$
\end{proof}
\end{proposition}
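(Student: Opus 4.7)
The plan is to verify the three properties of an equivalence relation separately. Reflexivity and symmetry should both come for free from the definition: any $\preceq_1 \in \mathcal{R}_\mathfrak{q}$ serves as its own non-trivial witness whenever $\preceq_1$ itself is non-trivial, and the roles of $\preceq_1,\preceq_2$ in the definition of dependency are symmetric. So essentially all the work goes into transitivity.

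For transitivity I would set it up as follows. Assume $\preceq_1,\preceq_2 \in \mathcal{R}_\mathfrak{q}$ are dependent, witnessed by a non-trivial $\preceq$ with $\preceq_1,\preceq_2 \leq \preceq$, and that $\preceq_2,\preceq_3 \in \mathcal{R}_\mathfrak{q}$ are dependent, witnessed by a non-trivial $\preceq'$ with $\preceq_2,\preceq_3 \leq \preceq'$. The key observation is that $\preceq_2$ is a common lower bound of $\preceq$ and $\preceq'$ in the partial order $\leq$. Invoking the tree property of $\mathcal{R}_\mathfrak{q}$ (Theorem \ref{treering}, or equivalently Proposition \ref{tree3}), any two elements with a common lower bound in $\mathcal{R}_\mathfrak{q}$ are comparable, so WLOG $\preceq \leq \preceq'$. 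Transitivity of $\leq$ (Proposition \ref{tree1}) then yields $\preceq_1 \leq \preceq \leq \preceq'$, and we already have $\preceq_3 \leq \preceq'$. Since $\preceq'$ is non-trivial, it is a common non-trivial coarsening of $\preceq_1$ and $\preceq_3$, so they are dependent.

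The one delicate point I expect to be the main obstacle is that the tree property of Proposition \ref{tree3} applies only within the fixed tree $\mathcal{R}_\mathfrak{q}$, whereas the definition of dependency permits the witnessing coarsening $\preceq$ to be any non-trivial quasi-ordering on $R$, and the support of a quasi-ordering can strictly grow under coarsening (as Lemma \ref{counterexp} illustrates). To close this gap I would argue that since dependency is being considered as a relation on $\mathcal{R}_\mathfrak{q}$, the appropriate reading is that the witness $\preceq$ is likewise drawn from $\mathcal{R}_\mathfrak{q}$; after that restriction is made explicit, the comparability step above goes through unchanged, completing the proof.
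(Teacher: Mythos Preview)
Your argument is essentially identical to the paper's: both dispatch reflexivity and symmetry as trivial, and for transitivity both observe that $\preceq_2 \leq \preceq, \preceq'$ and then invoke the tree structure of $\mathcal{R}_{\mathfrak{q}}$ (Theorem~\ref{treering}) to compare $\preceq$ and $\preceq'$, concluding that $\preceq'$ witnesses the dependency of $\preceq_1$ and $\preceq_3$. Your worry about the witnesses lying in $\mathcal{R}_{\mathfrak{q}}$ is handled by the paper in exactly the way you propose---it simply takes $\preceq, \preceq' \in \mathcal{R}_{\mathfrak{q}}$ without further comment---so your reading is the intended one.
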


\begin{corollary} \label{depequivspec} Let $R$ be a ring. Then dependency is an equivalence relation on $\mathcal{Q}^S(R)$ and $\mathcal{Q}^M(R).$
\end{corollary}
\begin{proof} This follows immediately from Lemma \ref{specsup} and Proposition \ref{depequiv}.
\end{proof}

\section{The Quasi-Real Spectrum of a Ring} \label{sectionspec}

\noindent
Let $R$ be a ring. In this final section of the present paper we first discuss the topology $\tau_T$ on $\mathcal{Q}(R),$ which is induced by the coarsening relation $\leq$ (Definition \ref{Treetopology} and Proposition \ref{inducedpo2}). This topology coincides with canonical topologies on the real spectrum, respectively on the valuation spectrum (Remark \ref{restriction}). We show that $(\mathcal{Q}(R),\tau_T)$ is a $T_0$ space (Corollary \ref{treekol}). 
\vspace{1mm}

\noindent
Afterwards, we naturally extend the Harrison topology $\mathcal{H}$ from the real spectrum to the quasi-real spectrum $\mathcal{Q}(R)$ (Definition \ref{Harrison}). We prove that $(\mathcal{Q}(R),\mathcal{H})$ is a spectral space for any ring $R,$ i.e. homeomorphic to the spectrum of a commutative ring equipped with the Zariski topology (Theorem \ref{spectralspace}). This provides a uniform proof of the fact that the set of all orderings on $R$ and the set of all valuations on $R$ are both spectral spaces with respect to this topology (Corollary \ref{spectraldicho}). Moreover, we deduce that $(\mathcal{Q}(R),\subseteq)$ is a spectral set (Corollary \ref{spectralsub}). 

\begin{notation} For a ring $R$ we denote by
\begin{itemize}
\item[(i)] $\mathcal{O}(R)$ the set of all orderings on $R.$ \vspace{1mm}

\item[(ii)] $\mathcal{V}(R)$ the set of all valuations on $R.$
\end{itemize}
\end{notation}

\noindent
Let $R$ be a commutative ring. The notion of real spectrum is due to Coste and Roy (\cite{Coste}), and refers to the set $\mathcal{O}(R)$ equipped with the so-called \textit{Harrison topology} (or \textit{spectral topology}), i.e. the topology generated by the open subbasis consisting of all the sets 
\[
U(a,b) := \{\leq \, \in \mathcal{O}(R)\colon a < b\} \quad (a,b \in R)
\]
This notion was later generalised to non-commutative rings by Leung, Marshall and Zhang (\cite{Leung}). Likewise, the term valuation spectrum refers to the set $\mathcal{V}(R)$ equipped with certain topologies. Of our interest is the topology $\tau'$ as introduced by Huber and Knebusch in \cite{Huber}, i.e. the one generated by the open subbasis consisting of all the sets
\[
U'(a,b) := \{v \in \mathcal{V}(R)\colon v(a) > v(b)\} = \{v \in \mathcal{V}(R)\colon a \prec_v b\}.
\]

\begin{definition} \label{Treetopology} Let $R$ be a ring. 
\begin{enumerate}
\item We call $\mathcal{Q}(R)$ the \textit{quasi-real spectrum} of $R.$ \vspace{1mm}

\item For $a,b \in R$ we define the set
\[
W(a,b) := \{ \preceq \; \in \mathcal{Q}(R)\colon 0 \preceq a \preceq b\}.
\] 

\item We call the topology $\tau_T$ on $\mathcal{Q}(R)$ generated by the closed subbasis $$\{W(a,b) \subseteq \mathcal{Q}(R)\colon a,b \in R\}$$ the \textit{tree topology}.
\end{enumerate}
\end{definition}

\noindent
The corresponding open subbasis of $\tau_T$ is given by the sets
\[
W(a,b)^C = \{ \preceq \; \in \mathcal{Q}(R)\colon b \prec a \textrm{ or } a \prec 0\}.
\]

\begin{example} Let $R$ be a ring.
\begin{enumerate}
\item The set $\mathcal{O}(R) = W(-1,0)^C$ is open with respect to $\tau_T.$ \vspace{1mm}

\item The set $\mathcal{V}(R) = W(0,-1)$ is closed with respect to $\tau_T.$
\end{enumerate}
\end{example}

\begin{remark} \label{restriction} \hspace{7cm}
\begin{enumerate}
\item If we restrict $\mathcal{Q}(R)$ to valuations we obtain
\[
W(a,b)^C \cap \mathcal{V}(R) = \{v \in \mathcal{V}(R)\colon b \prec_v a\} = U'(b,a). 
\]
Thus, the tree topology and $\tau'$ coincide on $\mathcal{V}(R).$ \vspace{1mm}

\item If we restrict $\mathcal{Q}(R)$ to orderings we obtain 
\[
W(a,b)^C \cap \, \mathcal{O}(R) = U(b,a) \cup U(a,0), 
\]
and conversely
\begin{align*}
U(a,b) &= \{ \leq \; \in \mathcal{O}(R)\colon a-b < 0 \textrm{ or } 0 < 0\} \\ &= W(0,a-b)^C \cap \, \mathcal{O}(R).
\end{align*}

\noindent
Thus, the tree topology and the Harrison topology coincide on $\mathcal{O}(R).$
\end{enumerate}
\end{remark}

\noindent
The following result describes the relationship between $\leq$ and the tree topology $\tau_T.$

\begin{proposition} \label{inducedpo2} Let $R$ be a ring, let $\preceq_1, \preceq_2 \; \in \mathcal{Q}(R),$ and let $\leq$ denote the coarsening relation on $\mathcal{Q}(R).$ Then
\[
\preceq_2 \, \in \overline{\{\preceq_1\}} \; \Leftrightarrow \; \, \preceq_1 \, \leq \, \preceq_2,
\]
w.r.t. $\tau_T.$
\end{proposition}
\begin{proof} Let $\mathcal{N}_{\preceq_2}$ denote the set of all open neighbourhoods of $\preceq_2$ w.r.t. $\tau_T.$ Then
\begin{align*} \preceq_1 \, \leq \, \preceq_2 &\Leftrightarrow \forall x,y \in R\colon (0 \preceq_1 x \preceq_1 y \Rightarrow 0 \preceq_2 x \preceq_2 y) \\
&\Leftrightarrow \forall x,y \in R\colon (y \prec_2 x \textrm{ or } x \prec_2 0 \Rightarrow y \prec_1 x \textrm{ or } x \prec_1 0) \\
&\Leftrightarrow \forall N \in \mathcal{N}_{\preceq_2}\colon N \cap \{\preceq_1\} \neq \emptyset \\
&\Leftrightarrow \; \preceq_2 \; \in \overline{\{\preceq_1\}}. \end{align*}
\end{proof}

\begin{corollary} \label{treekol} Let $R$ be a ring. Then $(\mathcal{Q}(R),\tau_T)$ is a $T_0$ space. \vspace{1mm}
\end{corollary}
\begin{proof}
Two quasi-orderings $\preceq_1,\preceq_2 \; \in \mathcal{Q}(R)$ are topologically indistinguishable if and only if $\preceq_1 \; \in \overline{\{\preceq_2\}}$ and $\preceq_2 \; \in \overline{\{\preceq_1\}},$ i.e. if and only if $\preceq_1 \, \leq \, \preceq_2 \, \leq \, \preceq_1$ with respect to the coarsening relation $\leq.$ Anti-symmetry of $\leq$ yields $\preceq_1 \, = \, \preceq_2.$ Thus, $(\mathcal{Q}(R),\tau_T)$ is $T_0.$
\end{proof}

\begin{remark} A singleton $\{\preceq\}$ is $\tau_T$-closed if and only if $\preceq$ is a trivial quasi-ordering. This is an immediate consequence of Proposition \ref{inducedpo2} and the fact that the $\leq$-maximal elements are precisely the trivial quasi-orderings.
\end{remark}

\noindent
We conclude this paper by unifying the Harrison topology and the topology $\tau'$ (see above) in a more canonical way.

\begin{definition} \label{Harrison} Let $R$ be a ring. 
\begin{enumerate}
\item For $a,b \in R$ we define the set
\[
U(a,b) := \{ \preceq \; \in \mathcal{Q}(R)\colon a \prec b\}.
\] 
\item The topology $\mathcal{H}$ on $\mathcal{Q}(R)$ generated by the open subbasis $$\{U(a,b) \subseteq \mathcal{Q}(R)\colon a,b \in R\}$$ is called the \textit{Harrison} \textit{topology} (or \textit{spectral topology}). \vspace{1mm}
\end{enumerate}
\end{definition}

\begin{example} \label{expclopen} \hspace{7cm}
\begin{enumerate} 
\item The set $\mathcal{O}(R)$ is open and closed with respect to $\mathcal{H}.$ Using Theorem \ref{dicho} we obtain $\mathcal{O}(R) = U(-1,0)$ and $\left(\mathcal{O}(R)\right)^C = \mathcal{V}(R) = U(0,-1).$ \vspace{1mm}

\item The set $\mathcal{V}(R)$ is also open and closed. This follows immediately from (1).
\end{enumerate}
\end{example}

\noindent
Obviously, by definition, $\mathcal{H}$ coincides with the usual Harrison topology on the real spectrum, and the restriction of $\mathcal{H}$ to $\mathcal{V}(R)$ with the topology $\tau'$ of Huber and Knebusch. In fact, the Harrison topology on $\mathcal{Q}(R)$ shares many properties with the Harrison topology on the real spectrum of a ring. In the following we show that $(\mathcal{Q}(R),\mathcal{H})$ is a spectral space (Theorem \ref{spectralspace}). Our proof of this theorem is (essentially) identical with the one in the real case. 

\begin{lemma} \label{inducedpo} Let $R$ be a ring and $\preceq_1, \preceq_2 \, \in \mathcal{Q}(R).$ Then
\[
\preceq_2 \; \in \overline{\{\preceq_1\}} \; \Leftrightarrow \; \, \preceq_1 \, \subseteq \, \preceq_2, 
\]
w.r.t. $\mathcal{H}.$
\begin{proof} Completely analogue to the proof of Proposition \ref{inducedpo2}.
\end{proof}
\end{lemma}

\begin{corollary} \label{Kolmogorovspace} Let $R$ be a ring. Then $(\mathcal{Q}(R),\mathcal{H})$ is a $T_0$ space.
\begin{proof} Completely analogue to the proof of Corollary \ref{treekol}. 
\end{proof}
\end{corollary}

\noindent
For a further study of the Harrison topology we also generalise the Tychonoff topology (or constructible topology) from the real spectrum to $\mathcal{Q}(R).$

\begin{definition} \label{Tychonoff} Let $R$ be a ring. 
\begin{enumerate}
\item For $a,b \in R$ we define the set
\[
V(a,b) := \{ \preceq \; \in \mathcal{Q}(R)\colon a \preceq b\}.
\] 
\item The topology $\mathcal{T}$ on $\mathcal{Q}(R)$ generated by the open subbasis $$\{U(a,b) \subseteq \mathcal{Q}(R)\colon a,b \in R\} \cup \{V(a,b) \subseteq \mathcal{Q}(R)\colon a,b \in R\} $$ is called the \textit{Tychonoff topology} (or \textit{constructible topology}, respectively \textit{patch topology}). \vspace{1mm}
\end{enumerate}
\end{definition}

\begin{proposition} \label{Boolean} Let $R$ be a ring. Then $(\mathcal{Q}(R),\mathcal{T})$ is a compact space.
\end{proposition}
\begin{proof} We first show that this space is quasi-compact. Any quasi-ordering $\preceq$ on $R$ is uniquely determined by the map 
  \[
     f_{\preceq}\colon R^2 \to \{0,1\}, \: (a,b) \mapsto \left\{\begin{array}{cc} 1, & a \preceq b 
          \vspace{1mm} \\ -1, & b \prec a \end{array}\right.
  \]

\noindent
Hence, we may consider $\mathcal{Q}(R)$ as a subset of $\mathcal{F} := \{0,1\}^{R^2}.$ We equip $\{0,1\}$ with the discrete topology, which makes it a quasi-compact space. So by the Tychonoff theorem, $\mathcal{F}$ is also quasi-compact with respect to the product topology. An open subbasis is given by the sets 
\[
H_{\epsilon}(a,b) := \{f\colon R^2 \to \{0,1\}, \: (a,b) \mapsto \epsilon\} \quad (a,b \in R, \: \epsilon \in \{0,1\}).
\]

\noindent
Therefore, we may identify $\mathcal{Q}(R)$ as a subspace of $\mathcal{F}.$ We show that it is a closed subspace, which implies that $(\mathcal{Q}(R),\mathcal{T})$ is quasi-compact. So let $f \in \mathcal{F}$ be not induced by some quasi-ordering on $R,$ i.e. the induced binary relation on $R$ does not satisfy one of the axioms of a quasi-ordering. If for instance axiom (QR4) fails, then we find some $x,y,z \in R$ such that $f(x,y) = 1$ and $f(y,z) = 0 = f(y+z,x+z),$ respectively $f(z,y) = 0 = f(y+z,x+z).$ Thus, $f$ is separated from $\mathcal{Q}(R)$ by the open set
\[
H_1(x,y) \cap (H_0(y,z) \cup H_0(y,z)) \cap H_0(y+z,x+z),
\]

\noindent
whence $s \notin \overline{\mathcal{Q}(R)}.$ The same arguing applies to all the other axioms. Therefore, $\mathcal{Q}(R)$ is a closed subspace of $\mathcal{F},$ implying that $(\mathcal{Q}(R),\mathcal{T})$ is quasi-compact.

\vspace{1mm}

\noindent
It remains to show that $(\mathcal{Q}(R),\mathcal{T})$ is Hausdorff. So let $\preceq_1, \preceq_2 \; \in \mathcal{Q}(R)$ with $\preceq_1 \; \neq \; \preceq_2.$ Then there exist (w.l.o.g.) some $a,b \in R$ such that $a \preceq_1 b$ and $b \prec_2 a.$ Consequently, $\preceq_1 \, \in V(a,b)\backslash U(b,a)$ and $\preceq_2 \, \in U(b,a)\backslash V(a,b).$ \vspace{1mm}
\end{proof}

\begin{theorem} \label{spectralspace} Let $R$ be a ring. Then $(\mathcal{Q}(R),\mathcal{H})$ is a spectral space, i.e. homeomorphic to the Zariski spectrum of a commutative ring.
\begin{proof} We have already proven that $(\mathcal{Q}(R),\mathcal{H})$ is a $T_0$ space (Corollary \ref{Kolmogorovspace}), and that $(\mathcal{Q}(R),\mathcal{T})$ is a compact space (Proposition \ref{Boolean}). Moreover, the sets $U(a,b)$ are both open and closed with respect to $\mathcal{T},$ and an open subbasis of $\mathcal{H}.$ Therefore, \cite[Proposition 7]{Hochster} tells us that $(\mathcal{Q}(R),\mathcal{H})$ is a spectral space.
\end{proof}
\end{theorem}

\begin{corollary} \label{spectraldicho} Let $R$ be a ring. Then $(\mathcal{O}(R),\mathcal{H})$ and $(\mathcal{V}(R),\mathcal{H})$ are both spectral spaces.
\begin{proof} This is an easy consequence of Theorem \ref{spectralspace} and Example \ref{expclopen}, since closed subsets of spectral spaces are again spectral spaces (cf. \cite[Theorem 2.1.3]{Dick}). 
\end{proof}
\end{corollary}

\begin{corollary} \label{spectralsub} Let $R$ be a ring. Then $(\mathcal{Q}(R), \subseteq)$ is a spectral set.
\begin{proof} By Theorem \ref{spectralspace} there is a commutative ring $S$ such that $(\mathcal{Q}(R), \mathcal{H})$ is homeomorphic to $(\textrm{Spec}(S),\mathcal{Z}),$ where $Z$ denotes the Zariski topology. According to \cite{LewisOhm}, this implies that $\mathcal{Q}(R)$ and $\textrm{Spec}(S)$ are order-isomorphic with respect to the induced partial orderings (see Lemma \ref{inducedpo}). Hence, $(\mathcal{Q}(R), \subseteq) \cong (\textrm{Spec}(S), \subseteq),$ i.e. $(\mathcal{Q}(R), \subseteq)$ is spectral.
\end{proof}
\end{corollary}

\section*{Acknowledgement}
\noindent
The results presented here are part of my doctoral thesis. I am grateful to my supervisor Salma Kuhlmann for her dedicated and continuous support.
\vspace{1mm}

\onehalfspace

\vspace{6mm}

\textsc{Fachbereich Mathematik und Statistik, Universit\"at Konstanz},

78457 \textsc{Konstanz, Germany}, 

E-mail address: simon.2.mueller@uni-konstanz.de

\end{document}